\documentclass[12pt]{amsart}
\oddsidemargin 3ex
\evensidemargin 3ex
\textheight 7.9in
\textwidth 6.0in

\font\bbbld=msbm10 scaled\magstephalf

\newcommand{\bi}{\bar{i}}
\newcommand{\bj}{\bar{j}}
\newcommand{\bk}{\bar{k}}
\newcommand{\bl}{\bar{l}}
\newcommand{\bm}{\bar{m}}
\newcommand{\bn}{\bar{n}}
\newcommand{\bp}{\bar{p}}
\newcommand{\bq}{\bar{q}}

\newcommand{\bz}{\bar{z}}

\newcommand{\bM}{\bar{M}}

\newcommand{\bpartial}{\bar{\partial}}

\newcommand{\fa}{\mathfrak{a}}
\newcommand{\fg}{\mathfrak{g}}

\newcommand{\fRe}{\mathfrak{Re}}

\newcommand{\bfC}{\hbox{\bbbld C}}

\newcommand{\bfR}{\hbox{\bbbld R}}
\newcommand{\bfS}{\hbox{\bbbld S}}

\newcommand{\cT}{\mathcal{T}}

\newcommand{\tr}{\mbox{tr}}

\newcommand{\ol}{\overline}
\newcommand{\ul}{\underline}

\newtheorem{theorem}{Theorem}[section]
\newtheorem{lemma}[theorem]{Lemma}
\newtheorem{proposition}[theorem]{Proposition}

 \theoremstyle{definition}

\theoremstyle{remark}
\newtheorem{remark}[theorem]{Remark}

\numberwithin{equation}{section}



\begin{document}
\setlength{\baselineskip}{1.2\baselineskip}

\title[Complex Monge-Amp\`ere Type Equation]
{The Dirichlet Problem for a Complex Monge-Amp\`ere Type Equation \\
    on Hermitian Manifolds}
\author{Bo Guan}
\address{Department of Mathematics, Ohio State University,
         Columbus, OH 43210}
\email{guan@math.osu.edu}
\author{Qun Li}
\address{Department of Mathematics and Statistics, Wright State University,
         Dayton, OH 45435}
\email{qun.li@wright.edu}
\thanks{Research of both authors were supported in part by
NSF grants.}

\begin{abstract}
We are concerned with fully nonlinear elliptic
equations on Hermitian manifolds and search for technical tools to overcome
difficulties in deriving {\em a priori} estimates which arise due to
the nontrivial torsion and general (non-pseudoconvex) boundary data.
We present our methods, which work for more general equations, by considering
a specific equation which resembles the complex Monge-Amp\`ere equation in
many ways but with crucial differences. Our work is motivated by recent
increasing interests in fully nonlinear equations on complex manifolds
from geometric problems.

{\em Mathematical Subject Classification (2010):}
  58J05, 58J32, 32W20, 35J25, 53C55.

{\em Keywords:} Fully nonlinear elliptic equations; Hermitian manifolds;
{\em a priori} estimates; strict concavity property; Donaldson conjecture;
Dirichlet problem.

\end{abstract}

\maketitle

\bigskip

\section{Introduction}
\label{gblq-I}
\setcounter{equation}{0}
\medskip

In this paper we continue our study in \cite{GL10, GL12} on
fully nonlinear elliptic equations of Monge-Amp\`ere type
on complex manifolds.
Let $(M^n, \omega)$ be a compact Hermitian manifold of
dimension $n \geq 2$ with smooth boundary $\partial M$
and $\bM = M \cup \partial M$.
Let $\chi$ be a smooth real $(1,1)$ form on $\bM$.
Define for a function $u \in C^2 (M)$
\[ \chi_u = \chi + \frac{\sqrt{-1}}{2} \partial \bpartial u. \]
We consider the Dirichlet problem
 \begin{equation}
\label{CH-I10}
\left\{ \begin{aligned}
 & \chi_u^n =  \psi \chi_u \wedge \omega^{n-1}
\;\; \mbox{in $\bM$}, \\
 &        u = \varphi \;\; \mbox{on $\partial M$}.
\end{aligned} \right.
\end{equation}
This is a fully nonlinear equation of Monge-Amp\`ere type.
Given $\psi \in C^{\infty} (\bM)$ and
$\varphi \in C^{\infty} (\partial M)$ we seek solutions
$u \in C^{\infty} (\bM)$ with $\chi_u > 0$ so that
equation~\eqref{CH-I10} is elliptic; we shall call such functions
in $C^{2} (\bM)$
{\em admissible} or {\em  strictly $\chi$-plurisubharmonic}.
So we require $\psi > 0$; when $\psi \geq 0$ equation~\eqref{CH-I10}
becomes degenerate.

Fully nonlinear elliptic and parabolic equations
have close and natural connections with problems in
complex geometry and analysis.
In K\"ahler geometry, the classical Calabi conjectures~\cite{Calabi56},
\cite{Yau78},
the K\"ahler-Ricci flow and solitons,
and Donaldson's conjectures~\cite{Donaldson99}
concerning geodesics in the space of K\"ahler metrics,
among others, all reduce to the existence, regularity and other questions
about complex Monge-Amp\`ere equations on K\"ahler manifolds.
There has also been increasing interest in fully nonlinear equations
other than the complex Monge-Amp\`ere equation on complex or symplectic
manifolds from geometric problems.
In \cite{Donaldson99a} Donaldson proposed the following equation
\begin{equation}
\label{CH-I10'}
\chi_u^n =  c \chi_u^{n-1} \wedge \omega
\end{equation}
on closed K\"ahler manifolds in connection with moment maps.
The equation was studied by Chen~\cite{Chen04},
Weinkove and Song~\cite{Weinkove04},
\cite{Weinkove06}, \cite{SW08} using parabolic methods.
More recently Fang, Lai and Ma~\cite{FLM11} extended the results of \cite{SW08}
to a class of fully nonlinear equations that covers both \eqref{CH-I10}
(where $\psi$ is constant) and \eqref{CH-I10'}.
Another example which was important in leading us to \eqref{CH-I10} is the
{\em $V$-soliton equation} introduced by La Nave and Tian~\cite{LaNave-Tian}
in their work on K\"ahler-Ricci flow on symplectic quotients.

It is well known that a key step in solving fully nonlinear elliptic
equations is to derive {\em a priori} estimates up to second
order derivatives. For the Dirichlet problems in K\"ahler or Hermitian
manifolds there arise new substantial technical difficulties in
deriving these estimates.
Our main interest in this paper is to search for general
techniques to overcome these difficulties.
The methods presented in this paper work for more general equations
and produce new results even for fully nonlinear equations in $\bfC^n$
and their real counterpart in $\bfR^n$.
We shall focus on the Dirichlet problem~\eqref{CH-I10} here; other
equations will be treated in forthcoming papers.

We first state our result on the global estimates.

\begin{theorem}
\label{gl-thm10}
Let $u \in C^{4} (M) \cap C^2 (\bM)$ be an admissible solution of
equation~\eqref{CH-I10} where $\psi \in C^2 (\bM)$, $\psi > 0$.
Suppose that there exists a function $\ul{u} \in C^2 (\bM)$ satisfying
\begin{equation}
\label{CH-I20'}
n \chi_{\ul{u}}^{n-1} > (n-1) \psi \chi_{\ul{u}} \wedge \omega^{n-2}
\;\; \mbox{on $M$}.
\end{equation}
Then there exist $C_1$, $C_2$ both depending on $|u|_{C^0(\bM)}$,
$|\ul{u}|_{C^2({M})}$, the positive lower bound of $\chi_{\ul{u}}$, and  geometric quantities of $M$ such that
 \begin{equation}
\label{CH-I30}
\max_{\bM} |\nabla u| \leq  C_1 (1 + \max_{\partial M} |\nabla u|)
\end{equation}
and
\begin{equation}
\label{CH-I30'}
\max_{\bM}  
|\Delta u| \leq C_2 (1 + \max_{\partial M} |\Delta u|).
\end{equation}
In particular, if $M$ is closed, i.e. $\partial M =\emptyset$, then
\begin{equation}
\label{CH-I30c}
|\nabla u| \leq C_1,  \;\;
|\Delta u| \leq C_2
\;\; \mbox{on $M$}.
\end{equation}
The constant $C_1$ depends in addition on $\inf_M \psi > 0$ but $C_2$
only on $|\psi^{\frac{1}{n-1}}|_{C^{1,1} (\bM)}$.
\end{theorem}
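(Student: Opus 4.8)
\medskip
\noindent\textbf{Proof strategy.}
The plan is to recast \eqref{CH-I10} as a concave fully nonlinear equation and then run the maximum principle twice: once on a gradient quantity to obtain \eqref{CH-I30}, and once on a second-order quantity to obtain \eqref{CH-I30'}. Throughout, the subsolution $\ul u$ and the structural hypothesis \eqref{CH-I20'} are what allow the many lower-order and third-order terms created by the torsion and curvature of $\o$ to be absorbed.

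I would first fix notation. Work in a local unitary frame for $\o$, with Chern connection $\nabla$, write $\mathcal G_{i\bj} = \chi_{i\bj} + \nabla_i\nabla_{\bj} u$ for the coefficients of $\chi_u$, and let $\l = (\l_1,\dots,\l_n)$ be its eigenvalues, which lie in the positive cone $\G = \{\l_i>0\}$ by admissibility. Dividing \eqref{CH-I10} by $\chi_u^n$ shows it is equivalent to
\[ F(\mathcal G) \;:=\; \Big( \f{\s_n(\l)}{\s_1(\l)} \Big)^{\f{1}{n-1}} \;=\; \Big( \f{\psi}{n} \Big)^{\f{1}{n-1}} \;=:\; \btau , \]
where $F$ is elliptic and concave on $\G$; set $F^{i\bj}=\p F/\p\mathcal G_{i\bj}>0$, $\cF=\sum_i F^{i\bi}$, and let $\cL=F^{i\bj}\nabla_i\nabla_{\bj}$ be the linearized operator. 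Since $\chi_u>0$ forces $\Delta u = \tr_g\chi_u - \tr_g\chi \geq -C$ automatically, only an \emph{upper} bound for the top eigenvalue $\l_1=\l_1(\mathcal G)$ is needed for \eqref{CH-I30'}. Two structural inputs should be recorded at the outset: the \emph{strict concavity} of $F$, which (at a point where $\mathcal G$ is diagonal) gives the second-derivative form $F^{i\bj,p\bq}$ a definite negative contribution along off-diagonal directions; and the consequence of \eqref{CH-I20'} that $\ul u$ is a \emph{strict subsolution}, i.e.\ there are $\theta>0$ and $R>0$ with
\[ F^{i\bj}\big((\chi_{\ul u})_{i\bj}-\mathcal G_{i\bj}\big) \;\geq\; \theta(1+\cF) \quad \text{whenever } \l_1(\mathcal G)\geq R . \]
This last inequality is the substitute for an a priori positive lower bound on $\cF$, which for this operator genuinely fails as $\l_1\to\infty$.

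For \eqref{CH-I30} I would apply the maximum principle to $\Phi=\log|\nabla u|_g^2 + h(\ul u - u)$ for an increasing convex $h$ (adding, if needed, an auxiliary term in $u$). At an interior maximum one has $\cL\Phi\leq 0$, and expanding produces: a nonnegative term $|\nabla u|^{-2}\sum_k F^{i\bj}\nabla_k\nabla_i u\,\overline{\nabla_k\nabla_j u}$ (plus harmless lower-order pieces); cross terms and torsion/curvature terms linear in $\nabla u$ or in $\nabla\nabla u$, dominated by Cauchy--Schwarz against that nonnegative term and against $h'\cF$; and the term $h'\,F^{i\bj}\big((\chi_{\ul u})_{i\bj}-\mathcal G_{i\bj}\big)$, into which the strict-subsolution inequality feeds. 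Using $\inf_M\psi>0$ where required, one concludes that $|\nabla u|$ is bounded at the maximum of $\Phi$; if that maximum lies on $\p M$ the bound is trivial. This yields \eqref{CH-I30}.

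For \eqref{CH-I30'} I would apply the maximum principle to
\[ W \;=\; \log\l_1(\mathcal G) \;+\; \phi\big(|\nabla u|_g^2\big) \;+\; a\,(\ul u - u), \]
with $a$ large and $\phi$ suitably chosen, using the standard device of perturbing $\l_1$ to a smooth symmetric function of $\mathcal G$ to deal with a repeated top eigenvalue. Differentiating $F(\mathcal G)=\btau$ once and twice and invoking the concavity inequality $F^{i\bj,p\bq}\eta_{i\bj}\,\overline{\eta_{p\bq}}\leq 0$, then commuting covariant derivatives, one obtains a lower bound for $F^{i\bj}\nabla_1\nabla_{\bar 1}\mathcal G_{i\bj}$, hence for $\cL\l_1$, modulo curvature terms and torsion terms of schematic type $\l_1^{-1}F^{i\bj}\,\mathrm{Re}(T*\nabla\mathcal G)$ and $F^{i\bj}(T*T)$. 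Controlling these torsion terms is the crux: they are absorbed via Cauchy--Schwarz into the good term $\l_1^{-2}\,F^{i\bj}\nabla_i\l_1\,\overline{\nabla_j\l_1}$ coming from $\cL\log\l_1$ and into the Hessian term furnished by $\phi(|\nabla u|_g^2)$, using the standard splitting of indices into those $i$ with $\l_i$ comparable to $\l_1$ and the rest (the strict concavity of $F$ supplying the room on the second group), and the residual bounded multiples of $\cF$ are then dominated by the strict-subsolution term $a\,F^{i\bj}\big((\chi_{\ul u})_{i\bj}-\mathcal G_{i\bj}\big)\geq a\theta(1+\cF)$ with $a$ taken large. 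One then reads off a bound for $\l_1$ at the maximum of $W$, hence for $\Delta u$ up to the boundary contribution; since only $\nabla\bpartial\btau\in L^\infty$ enters the computation, $C_2$ depends only on $|\psi^{1/(n-1)}|_{C^{1,1}(\bM)}$ beyond the quantities in the statement. Finally, when $\p M=\emptyset$ both maxima are automatically interior, so \eqref{CH-I30c} follows at once. The main obstacle, as indicated, is taming the third-order torsion terms in $\cL W$ without a lower bound on $\cF$; this is precisely where \eqref{CH-I20'} and the strict concavity of $F$ are indispensable.
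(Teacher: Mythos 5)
Your gradient step follows the same basic route as the paper's Section 3: test $\log|\nabla u|^2 + h(\ul u - u)$ (the paper uses the multiplicative form $e^\phi|\nabla u|^2$ with $\phi = Ae^{B\eta}$, $\eta = \ul u - u - \inf(\ul u - u)$), differentiate the equation once, and let the strict-subsolution/strict-concavity inequality of Lemma~\ref{gblq-lemma-C20} absorb the bounded multiples of $\cF$. That part is in line with the paper's argument.

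The second-order estimate, however, has a genuine gap as proposed. You take
$W = \log\lambda_1 + \phi(|\nabla u|^2) + a(\ul u - u)$ and you explicitly rely on the Hessian term furnished by $\phi(|\nabla u|^2)$ to absorb the third-order torsion terms. This makes the resulting bound on $\lambda_1$ depend on the a priori gradient bound, and since that gradient bound depends on $\inf_M\psi > 0$, you cannot conclude that $C_2$ depends only on $|\psi^{1/(n-1)}|_{C^{1,1}(\bM)}$. The theorem's precise dependence statement (and the remark following it about the degenerate case $\psi \geq 0$) requires the Laplacian estimate to be derived \emph{without} any gradient quantity in the test function. The paper's proof of \eqref{CH-I30'} achieves this by testing $\Phi = e^\phi W$ with $W = \tr\chi + \Delta u$ and $\phi = e^{A\eta}$: the torsion terms appearing after differentiating the equation twice are controlled by the commutator identities \eqref{gblq-B147} and the inequality \eqref{gblq-C905}, which absorbs $|W_i|^2/W$ into the positive term $\fg^{i\bi}\fg^{j\bj}|\fg_{i\bj j} - T_{ij}^j\fg_{j\bj}|^2$ that comes for free from $\p\bar\p\log\det\fg$, plus a term $-2\fRe\{\phi_i\bar\lambda_i\}$ that can be handled by the concavity without invoking $|\nabla u|$. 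Using the trace $W$ rather than $\log\lambda_1$ also sidesteps the nonsmoothness of $\lambda_1$ and, more importantly, replaces the second-derivative-of-$F$ concavity estimate by a clean algebraic identity tailored to this specific equation. To repair your argument you would need to drop the $\phi(|\nabla u|^2)$ term and control the torsion terms through the structure of the twice-differentiated equation alone, which is exactly what the paper's choice of test function is engineered to do.
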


The estimate for $\Delta u$ in \eqref{CH-I30c} is due to
Fang, Lai and Ma~\cite{FLM11} who introduced the cone condition~\eqref{CH-I20'},
when both $\omega$ and $\chi$ are K\"ahler and $\psi$ is a positive constant invariant.
In our case the main issue is to control
some extra third derivative terms which occur due to the nontrivial
torsion of $(M, \omega)$ when it is only assumed to be Hermitian.
The estimate also holds in the degenerate case ($\psi \geq 0$) since 
it does not require $\psi$ to have a positive lower bound, and is 
independent of the gradient bound, i.e. $C_2$ is independent of $C_1$ 
in \eqref{CH-I30}.
The gradient estimate in \eqref{CH-I30} which is crucial in order
to solve the Dirichlet problem~\eqref{CH-I10}, is new even when
$(M, \omega)$ is K\"ahler and $\chi = \omega$.

Our second primary goal in this paper is to derive boundary estimates
for second order derivatives on general Hermitian manifolds with arbitrary 
boundary (without further restrictions except being smooth and compact). 
We are able to do this under the assumption of existence of a subsolution,
and consequently solve the Dirichlet problem~\eqref{CH-I10}.

\begin{theorem}
\label{gl-thm20}
Let $(M^n, \omega)$ be a compact Hermitian manifold with smooth boundary
$\partial M$, $\varphi \in C^{\infty} (\partial M)$, $\psi \in C^{\infty} (\bM)$
and $\psi > 0$.
Suppose there exists an admissible subsolution $\ul{u} \in C^2 (\bM)$ satisfying
 \begin{equation}
\label{CH-I20}
\chi_{\ul{u}}^n \geq  \psi \chi_{\ul{u}} \wedge \omega^{n-1}
\;\; \mbox{on $\bM$}
\end{equation}
and $\ul{u} = \varphi$ on $\partial M$.
Then the Dirichlet problem~\eqref{CH-I10} admits a unique
admissible solution $u \in C^{\infty} (\bM)$. Moreover,
\begin{equation}
\label{gl3-C2}
|u|_{C^2 (\bM)} \leq C.
\end{equation}
where $C$ depends on $\ul{u}$ up to its second derivatives and
other known data.
\end{theorem}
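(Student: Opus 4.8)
The plan is to solve \eqref{CH-I10} by the method of continuity, the central issue being the \emph{a priori} bound \eqref{gl3-C2}, whose interior (global) parts are handed to us by Theorem~\ref{gl-thm10}, so the real work is at $\partial M$. First I would recast the equation: writing $\lambda(\chi_u)$ for the eigenvalues of $\chi_u$ relative to $\omega$ and $\sigma_k$ for the elementary symmetric functions, \eqref{CH-I10} reads $\sigma_n(\lambda(\chi_u)) = \tfrac1n\psi\,\sigma_1(\lambda(\chi_u))$, i.e.\ $F(\lambda(\chi_u)) = \tilde\psi$ with $F = (\sigma_n/\sigma_1)^{1/(n-1)}$ and $\tilde\psi = (\psi/n)^{1/(n-1)}>0$. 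On the positive cone $\Gamma$, $F$ is smooth, symmetric, positive, increasing in each entry, degree-one homogeneous and concave, admissibility meaning $\lambda(\chi_u)\in\Gamma$, and the subsolution hypothesis \eqref{CH-I20} becomes $F(\lambda(\chi_{\ul{u}})) \geq \tilde\psi$ on $\bM$. Uniqueness is then immediate: if $u_1,u_2$ both solve the problem, then $\chi_{su_1+(1-s)u_2}$ is admissible for $s\in[0,1]$, so $u_1-u_2$ (which vanishes on $\partial M$) satisfies the linear equation $\big(\int_0^1 F^{i\bar j}(\chi_{su_1+(1-s)u_2})\,ds\big)\p_i\p_{\bj}(u_1-u_2)=0$, elliptic with no lower-order terms, whence $u_1\equiv u_2$ by the maximum principle.

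For existence I would run the continuity method along $F(\lambda(\chi_{u^t})) = \tilde\psi_t$, $u^t = \varphi$ on $\partial M$, with $\tilde\psi_t = t\tilde\psi + (1-t)\tilde\psi_1$, where $\tilde\psi_1$ is a smooth positive function with $\tilde\psi \leq \tilde\psi_1 \leq F(\lambda(\chi_{\ul{u}}))$ chosen so that $\ul{u}$ (after smoothing if necessary) solves the $t=0$ equation; then $\tilde\psi\le\tilde\psi_t\le F(\lambda(\chi_{\ul{u}}))$, so $\ul{u}$ remains a subsolution for every $t$. Let $\cA\subseteq[0,1]$ be the set of $t$ admitting an admissible solution $u^t\in C^{2,\alpha}(\bM)$. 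Then $0\in\cA$, so $\cA\neq\emptyset$; and $\cA$ is open because the linearization $Lv = F^{i\bar j}\p_i\p_{\bj}v$ together with the Dirichlet condition $v|_{\partial M}=0$ is invertible on the relevant H\"older spaces ($L$ being uniformly elliptic with no lower-order terms), so the implicit function theorem applies. It remains to show $\cA$ is closed, i.e.\ to bound $\|u^t\|_{C^{2,\alpha}(\bM)}$ uniformly in $t$.

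The \emph{a priori} estimates go in the usual order. $C^0$: the bound $u^t\geq\ul{u}$ comes from concavity, $L(\ul{u}-u^t)\ge F(\lambda(\chi_{\ul{u}}))-\tilde\psi_t\ge 0$, plus the maximum principle (the difference vanishing on $\partial M$); the upper bound comes from the equation, which by AM--GM forces $\tr_\omega\chi_{u^t}=\sigma_1(\lambda(\chi_{u^t}))$ to be bounded below by a positive constant, hence $\Delta_\omega u^t\geq -K$, so $u^t$ exceeds a fixed solution of $\Delta_\omega w=-K$ only by a $\Delta_\omega$-subharmonic amount with boundary data $\varphi$. $C^1$ and $C^2$: Theorem~\ref{gl-thm10} reduces $\max_M|\nabla u^t|$ and $\max_M|\Delta u^t|$ — the latter controlling the whole complex Hessian since $\chi_{u^t}>0$ — to their maxima over $\partial M$. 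On $\partial M$, tangential first and second derivatives are read off from $u^t=\varphi$; the normal first derivative and the mixed tangential–normal second derivatives are handled by barriers built from $\ul{u}$ and a defining function of $\partial M$; and the double-normal second derivative is then extracted from the equation. With $|D^2u^t|\le C$ the equation is uniformly elliptic with $F$ concave, so interior Evans--Krylov plus boundary Krylov give $\|u^t\|_{C^{2,\alpha}(\bM)}\le C$, and differentiating the equation with Schauder bootstrap yields $\|u^t\|_{C^{k,\alpha}}\le C_k$ for all $k$; hence $\cA$ is closed, $\cA=[0,1]$, and $u:=u^1\in C^\infty(\bM)$ solves \eqref{CH-I10} and satisfies \eqref{gl3-C2}.

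The hard part will be the boundary estimate for the double-normal second derivative. When $\partial M$ is pseudoconvex the tangential part of $\chi_{u^t}$ is positive and this step is standard, but for a \emph{general} Hermitian boundary it is not, and one must instead invoke a quantitative (strict) concavity property of $F$ together with the subsolution $\ul{u}$ to make up the deficit, all while controlling the extra third-order terms created by the torsion of $\omega$ — the same torsion corrections that underlie Theorem~\ref{gl-thm10}. Everything else is the routine continuity-method machinery above; it is this boundary analysis where essentially all the difficulty is concentrated.
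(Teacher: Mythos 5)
Your outline matches the paper's architecture step for step: recast the equation via $F=(\sigma_n/\sigma_1)^{1/(n-1)}$, obtain $C^0$ and uniqueness from concavity plus the maximum principle, reduce the interior $C^1$ and Laplacian bounds to the boundary via Theorem~\ref{gl-thm10}, construct boundary barriers from the subsolution and a defining function, rescue the double-normal estimate on a non-pseudoconvex boundary by exactly the strict-concavity property the paper isolates in Lemma~\ref{gblq-lemma-C20} (combined with Trudinger's device), and then close with Evans--Krylov, Schauder bootstrap, and the continuity method. The only place you are slightly cavalier is the $t=0$ endpoint of the continuity path when $\ul{u}\in C^2$ only (so that $F(\lambda(\chi_{\ul{u}}))$ is merely continuous and smoothing $\ul{u}$ perturbs the boundary data); this is a standard approximation issue that the paper likewise relegates to ``omitted standard steps,'' so it does not change the comparison.
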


Theorem~\ref{gl-thm20} was proved for the complex Monge-Amp\`ere equation by
the authors in \cite{GL10}.
While equation~\eqref{CH-I10} resembles the complex Monge-Amp\`ere equation
in many ways, there are some crucial differences and one needs
substantially new techniques to derive the desired estimates.
Especially, Theorem~\ref{gl-thm20} is new even when $M$ is a
bounded domain in $\bfC^n$ with $\chi = 0$. In \cite{LiSY04} Li
considered the Dirichlet problems for complex Hessian equations in
$\bfC^n$ assuming the existence of a {\em strict}
subsolution. Equation~\eqref{CH-I10}, however, fails to satisfy some
of the key structure conditions in \cite{LiSY04}.

The cone condition~\eqref{CH-I20'} is weaker than the subsolution assumption~\eqref{CH-I20}. On a closed manifold,
the existence of an admissible subsolution implies that either it is a solution or the equation does not admit a solution;
see Section~\ref{glq-P}.

Fundamental existence theorems
were established by Yau~\cite{Yau78} for the complex Monge-Amp\`ere equation
on closed K\"ahler manifolds (see also \cite{Aubin78}),
and by Caffarelli, Kohn, Nirenberg and Spruck~\cite{CKNS} for
the Dirichlet problem on strongly pseudoconvex domains in $\bfC^n$.
Cherrier and Hanani~\cite{Cherrier87}, 
\cite{CH99}, \cite{Hanani96a} made the
first effort to extend these results to Hermitian manifolds, followed by
\cite{GL10}, \cite{TWv10a}, \cite{TWv10b}, \cite{Zhang10} etc. In
\cite{TWv10b} Tosatti and Weinkove were able to extend Yau's
$C^0$ estimate completely to closed Hermitian manifolds
 and therefore established Calabi-Yau theorem for the Bott-Chern cohomology
 class.
It would be desirable to derive $C^0$ estimate for equation~\eqref{CH-I10}
on closed Hermitian manifolds.
In a different direction, the main result of \cite{CKNS}
was extended by the first author~\cite{Guan98b}
to general domains in $\bfC^n$ under the assumption of existence of a
subsolution, which found useful applications in some
important work; see e.g. ~\cite{GuanPF02}, \cite{GuanPF08}, Chen~\cite{Chen00}, Blocki~\cite{Blocki}, and Phong and Sturm~\cite{PS}.
 This is part of the motivation for us not to impose
any further assumption on the boundary beyond smoothness in
Theorem~\ref{gl-thm20}, so that it would be more convenient to use in
applications.

The parabolic Monge-Amp\`ere equation on closed Hermitian manifolds was
studied by Gill~\cite{Gill11}, Tosatti and Weinkove~\cite{TWv}.
In a series of papers~\cite{ST10}, \cite{ST11a}, \cite{ST11b}, \cite{ST12}
Streets and Tian investigated general curvature flows of Hermitian metrics.
Generalizations of Yau's theorems to symplectic four-manifolds with
compatible almost complex structures were proposed by
Donaldson~\cite{Donaldson06}, and studied by
Weinkove~\cite{Weinkove07}, Tosatti, Weinkove and Yau~\cite{TWY08},
\cite{TWv11a}, \cite{TWv11b}.

Deriving gradient estimates for fully nonlinear equations on complex
manifolds turns out a rather surprisingly difficult task. For
the Monge-Amp\`ere equation it was carried out by Cherrier~\cite{Cherrier87}
and later rediscovered by P.-F. Guan~\cite{GuanPF} and Blocki~\cite{Blocki09}.
It was, however, only very recently that Dinew and Kolodziej~\cite{DK} were
able to prove the gradient estimate for complex Hessian equations on
K\"ahler manifolds using scaling
techniques and Liouville type theorems. It would interesting to extend our
proof of the gradient estimate to more
general equations, at least for $\chi$-plurisubharmonic solutions.

\begin{remark}
Theorem~\ref{gl-thm20} was proved in \cite{GL10} for the complex Monge-Amp\`ere
equation 
under the stronger assumption $\ul{u} \in C^4 (\bM)$; see Theorem 1.1 of
\cite{GL10}. Using the methods of this paper it can be weakened
to $\ul{u} \in C^2 (\bM)$.
\end{remark}

The rest of the paper is organized as follows.
In Section~\ref{glq-P} we recall some basic formulas on Hermitian
manifolds and present a crucial lemma on which our estimates in the subsequent
sections will heavily depend. In Section~\ref{glq-C1} and
Section~\ref{glq-C2} we derive the maximum principle for $|\nabla u|$ and
$\Delta u$, the gradient and Laplacian of $u$, respectively, completing
the proof of Theorem~\ref{gl-thm10}.
Section~\ref{gblq-B} is devoted to the boundary estimates for second
derivatives. Once these estimates are established, equation~\eqref{CH-I10}
becomes uniformly elliptic. We can therefore come back to derive global
estimates for all (real) second derivatives as in Section 5 in \cite{GL10}
and apply the Evans-Krylov theorem for higher derivative estimates;
Theorem~\ref{gl-thm20} then may be proved by the continuity method.
We shall omit these standard steps.

We wish to thank Wei Sun for carefully reading the manuscript and
pointing out some errors in the previous versions.

\bigskip
\section{Preliminaries}
\label{glq-P}
\setcounter{equation}{0}
\medskip

In this section we briefly recall some basic formulas to be used in the
subsequent sections. We shall mostly follow notations in \cite{GL10}.
Throughout the paper, $g$ and $\nabla$ will denote the Riemannian metric
and Chern connection of $(M, \omega)$.
The torsion and curvature of $\nabla$ are defined respectively by
\begin{equation}
\label{cma-K95}
\begin{cases}
   T (X, Y)  = \nabla_X Y - \nabla_Y X - [X,Y], \\
 R (X, Y) Z  = \nabla_X \nabla_Y Z - \nabla_Y \nabla_X Z - \nabla_{[X,Y]} Z.
\end{cases}
\end{equation}

In local coordinates $z = (z_1, \ldots, z_n)$, $z_j =  x_j + \sqrt{-1} y_j$
we denote
$\partial_j = \partial/\partial z_j$, $\bar{\partial}_j = \partial/\partial \bz_j$,
$1 \leq j \leq n$,
and use the following notation
\begin{equation}
\label{cma-K70}
\left\{ \begin{aligned}
 g_{i \bj} \,& = g (\partial_i, \bar{\partial}_j),
                  \;\; \{g^{i\bj}\} = \{g_{i\bj}\}^{-1} , \\
 T_{ij\bk} \,& = g (T (\partial_i, {\partial}_j), \bar{\partial}_k), \;\;
      T^k_{ij} = g^{k\bl} T_{ij\bl},  \\
 R_{i\bj k\bl} \,& = g(R (\partial_i, \bar{\partial}_j) \partial_k, \bar{\partial}_l)
\end{aligned} \right.
\end{equation}
and, for a function $v \in C^4 (M)$,
$v_{i\bj} = v_{\bj i} = \partial_i \bpartial_j v$,
$v_{i\bj k} = \partial_k v_{i\bj} - \Gamma_{ki}^l v_{l\bj}$ and
\[ v_{i\bj k\bl}
   = \bpartial_l v_{i\bj k} - \ol{\Gamma_{lj}^q} v_{i\bq k}. \]
Recall that $v_{i\bj} - v_{\bj i}  = 0$, $v_{ij} - v_{ji}  = T_{ij}^l v_l$ and
(see  e.g, \cite{GL10} and \cite{GL12})
\begin{equation}
\label{gblq-B147}
\left\{ \begin{aligned}
 v_{i \bj k} - v_{k \bj i} = \,& T_{ik}^l v_{l\bj}, \;\;
v_{i \bj k} - v_{i k \bj} = - g^{l\bm} R_{k \bj i \bm} v_l, \\
v_{i\bj k\bl} - v_{i\bj \bl k}
      = \,& g^{p\bq} R_{k\bl i\bq} v_{p\bj}
          - g^{p\bq} R_{p \bl k \bj} v_{i\bq}, \\
v_{i \bj k \bl} - v_{k \bl i \bj}
  = \,&  g^{p\bq} (R_{k\bl i\bq} v_{p\bj} - R_{i\bj k\bq} v_{p\bl})
        + T_{ik}^p v_{p\bj \bl} + \ol{T_{jl}^q} v_{i\bq k}
        - T_{ik}^p \ol{T_{jl}^q} v_{p\bq}.
 \end{aligned}  \right.
\end{equation}

Let $u \in C^4 (M)$ be a solution of equation~\eqref{CH-I10}.
We denote
\[ \fg_{i\bj} = \chi_{i\bj} + u_{i\bj}, \;
   \{\fg^{i\bj}\} = \{\fg_{i\bj}\}^{-1}, \;
   W = \tr \chi + \Delta u, \;
   F^{i\bj} = \fg^{i\bj} - \frac{\delta_{i j}}{W}. \]
Locally equation~\eqref{CH-I10} then takes the form
\begin{equation}
\label{cma2-M10}
 \det \fg_{i\bj} = \frac{\psi}{n} W \det g_{i\bj}.
\end{equation}

Let $\ul{u} \in C^2 (M)$ and $\chi_{\ul{u}} > 0$ on $M$. Then
\begin{equation}
\label{gblq-C214}
 \epsilon \omega \leq \chi_{\ul{u}} \leq \epsilon^{-1} \omega
 \end{equation}
for some $\epsilon > 0$ and therefore
\begin{equation}
\label{gblq-C215}
F^{i\bj} (\chi_{i\bj} + \ul{u}_{i\bj}) \geq \epsilon F^{i\bj} g_{i\bj}.
\end{equation}

It is well known that $A \rightarrow (\det A/\tr A)^{\frac{1}{n-1}}$ is
concave for positive definite Hermitian matrices $A$ and hence so is
$A \rightarrow \log \det A - \log \tr A$. It follows that
\[ F^{i\bj} (\ul{u}_{i\bj} - u_{i\bj}) \geq 0 \;\; \mbox{in $M$}. \]
if $\ul{u}$ is a subsolution.
As a consequence we see that $u -\ul{u}$ is constant if $M$ is a closed
manifold. The following {\em strict concavity} property plays crucial roles
in our estimates.

\begin{lemma}
\label{gblq-lemma-C20}
Let $\ul{u} \in C^2 (M)$,
$\chi_{\ul{u}} > 0$ and satisfy ~\eqref{CH-I20'}.
There exist uniform constants $\theta, N > 0$ depending on $\epsilon$
and $\sup_M \psi$ such that when $W \geq N$,
\begin{equation}
\label{gblq-C210}
F^{i\bj} (\ul{u}_{i\bj} - u_{i\bj}) \geq \theta (1 + F^{i\bj} g_{i\bj}).
\end{equation}
\end{lemma}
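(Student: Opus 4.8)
Since \eqref{gblq-C210} is a pointwise statement, the plan is to fix a point, choose coordinates with $g_{i\bj}=\delta_{ij}$ and $\fg_{i\bj}=\lambda_i\delta_{ij}$, $\lambda_1\ge\cdots\ge\lambda_n>0$, so that $W=\sum_i\lambda_i$, $F^{ii}=\lambda_i^{-1}-W^{-1}$, and $\sum_iF^{ii}\lambda_i=n-1$. Writing $\ul u_{i\bj}-u_{i\bj}=(\chi_{\ul u})_{i\bj}-\fg_{i\bj}$ gives the two identities
\[ F^{i\bj}(\ul u_{i\bj}-u_{i\bj})=\sum_i\frac{(\chi_{\ul u})_{i\bi}}{\lambda_i}-\frac{\tr_g\chi_{\ul u}}{W}-(n-1),\qquad 1+F^{i\bj}g_{i\bj}=1+\sum_i\frac1{\lambda_i}-\frac nW, \]
and I would split into two cases according to the size of $\sum_i\lambda_i^{-1}=F^{i\bj}g_{i\bj}+n/W$.

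If $\sum_i\lambda_i^{-1}$ exceeds a threshold depending only on $\epsilon$, then since $(\chi_{\ul u})_{i\bi}\ge\epsilon$ by \eqref{gblq-C214} the first identity gives $F^{i\bj}(\ul u_{i\bj}-u_{i\bj})\ge\epsilon\sum_i\lambda_i^{-1}-(n-1)-n\epsilon^{-1}/W$, and with $N$ and the threshold large enough this is $\ge\frac\epsilon2\big(1+\sum_i\lambda_i^{-1}\big)\ge\frac\epsilon2\big(1+F^{i\bj}g_{i\bj}\big)$; this settles the case with $\theta=\epsilon/2$, using only \eqref{gblq-C215} and neither the cone condition nor $\sup_M\psi$.

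In the complementary case every $\lambda_i$ is bounded below, so $W\ge N$ forces $\lambda_1$ to be large while $\lambda_2,\dots,\lambda_n$ stay in a fixed compact subinterval of $(0,\infty)$ (depending on $\epsilon,\sup_M\psi$); then \eqref{cma2-M10} pins $\det(\fg|_V)=\prod_{i\ge2}\lambda_i=\tfrac\psi n\cdot\tfrac W{\lambda_1}$ within a factor $1+O(1/N)$ of $\psi/n$, where $V$ is the orthogonal complement of the $\lambda_1$-eigendirection. Here $1+F^{i\bj}g_{i\bj}$ is bounded, so it suffices to get a positive lower bound $F^{i\bj}(\ul u_{i\bj}-u_{i\bj})\ge c_0$; discarding the $i=1$ term and $\tr_g\chi_{\ul u}/W$ (both $O(1/N)$) this reduces to $\tr\!\big((\fg|_V)^{-1}(\chi_{\ul u}|_V)\big)>n-1$ with a definite gap. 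This is where \eqref{CH-I20'} enters: it is exactly the condition that keeps $\det(\chi_{\ul u}|_V)$ strictly above $\psi/n$ (with gap governed by the modulus of \eqref{CH-I20'}) for every hyperplane $V$; combined with $\det(\fg|_V)=(1+O(1/N))\,\psi/n$ through the elementary bound $\tr(P^{-1}Q)\ge(n-1)(\det Q/\det P)^{1/(n-1)}$, and, in the range where $\psi$ is small, with the trivial bound $\chi_{\ul u}|_V\ge\epsilon I$, this yields the required gap once $N$ is large. Equivalently one may invoke the matrix concavity of $A\mapsto\log\det A-\log\tr_gA$, comparing $\fg$ with $\chi_{\ul u}+tP_1$ ($P_1$ the rank-one projection onto the $\lambda_1$-eigendirection) and letting $t\to\infty$ at rate $o(\lambda_1)$: the correction $tF^{11}$ is negligible in this case and the main term becomes $(\det(\chi_{\ul u}|_V))^{1/(n-1)}-(\psi/n)^{1/(n-1)}$, again controlled from below by \eqref{CH-I20'}.

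The main obstacle is this last step, and within it its uniformity in $\psi$: for $\psi$ bounded away from $0$ the cone inequality supplies the gap, for $\psi$ near $0$ the bound $\chi_{\ul u}\ge\epsilon g$ does, but matching the two across the intermediate range while keeping all constants dependent only on $\epsilon$, $\sup_M\psi$ and the modulus of \eqref{CH-I20'} is delicate. If the direct estimates leave a gap there, I would close it by a compactness/contradiction argument: running a hypothetical bad sequence with $W\to\infty$ reduces the claim to a clean finite-dimensional statement about a pair of positive Hermitian $(n-1)\times(n-1)$ matrices subject to a determinant constraint and the limiting form of \eqref{CH-I20'}, from which the strict inequality follows.
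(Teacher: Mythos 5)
The paper's own proof is much shorter and, as the authors state explicitly just before it, is given only under the \emph{stronger} hypothesis \eqref{CH-I20} (admissible subsolution); the general cone case \eqref{CH-I20'} is credited to \cite{FLM11}. After the same initial reduction you make (diagonalize, discard via \eqref{gblq-C215} the case where some $\fg_{i\bi}$ is small, so assume $\fg_{1\bar{1}}\ge\cdots\ge\fg_{n\bn}\ge c_1$ and $\fg_{1\bar{1}}$ is large), the paper bounds $\prod_{i\ge2}\fg^{i\bi}(\chi_{\ul{u}})_{i\bi}\ge\fg_{1\bar{1}}\det\chi_{\ul{u}}/((\chi_{\ul{u}})_{1\bar{1}}\det\fg)$ by Hadamard's inequality, divides the subsolution inequality $\det\chi_{\ul{u}}\ge\frac{\psi}{n}\,\tr_g\chi_{\ul{u}}$ by the equation $\det\fg=\frac{\psi}{n}W$, and finishes with AM--GM together with $\fg_{1\bar{1}}/W\to1$ and $\tr_g\chi_{\ul{u}}/(\chi_{\ul{u}})_{1\bar{1}}\ge1+(n-1)\epsilon^2$. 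Your proposal instead attacks the cone condition directly by restricting to the hyperplane $V$ orthogonal to the large eigendirection, in the spirit of \cite{FLM11}; this is a genuinely different and more ambitious route, not a paraphrase of the paper's argument.

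Two concrete points. First, the step translating \eqref{CH-I20'} into ``$\det(\chi_{\ul{u}}|_V)>\psi/n$ for every hyperplane $V$'' does not literally follow from the displayed inequality: reading off coefficients in a frame diagonalizing both $g$ and $\chi_{\ul{u}}$, the display \eqref{CH-I20'} says $n\det(\chi_{\ul{u}}|_V)>\psi\,\tr(\chi_{\ul{u}}|_V)$, which is strictly weaker than $n\det(\chi_{\ul{u}}|_V)>\psi$ whenever $\tr(\chi_{\ul{u}}|_V)<1$ (certainly possible for small $\epsilon$), and in that regime your hyperplane estimate would not close. The condition you actually need and use, $n\chi_{\ul{u}}^{n-1}>\psi\,\omega^{n-1}$, is the natural cone determined by the linearization of \eqref{CH-I10}; you should either justify the passage from \eqref{CH-I20'} to it or flag that you are using this form of the cone condition. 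Second, the ``delicate matching in $\psi$'' you identify as the main obstacle is not really one: given a uniform additive margin $\det(\chi_{\ul{u}}|_V)\ge\psi/n+\delta_0$ (available from \eqref{gblq-C214} and compactness of $\bM$), one has $\det(\chi_{\ul{u}}|_V)/\det(\fg|_V)\ge(1+n\delta_0/\psi)/(1+O(1/N))\ge1+n\delta_0/(2\sup_M\psi)$ once $N$ is large, uniformly in $\psi$; no case split on the size of $\psi$, and no compactness fallback, is needed.
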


This lemma was proved in \cite{FLM11} in a slightly different form for
a class of equations including \eqref{CH-I10}.
We include a proof which seems simpler in our special case, under the stronger assumption~\eqref{CH-I20}.

\begin{proof}[Proof of Lemma~\ref{gblq-lemma-C20}]
At a fixed point assume that $g_{i\bj} = \delta_{ij}$ and
$\{\fg_{i\bj}\}$ is diagonal.
By \eqref{gblq-C215} if some $\fg_{i\bi}$ is very small then 
\eqref{gblq-C210} clearly holds. So we may assume
$\fg_{1\bar{1}} \geq \dots \geq \fg_{n\bn} \geq c_1$
(and therefore, $n \fg_{1\bar{1}} \geq W$)
where $c_1 > 0$ depends on $\epsilon$.
We have
\begin{equation}
\label{gblq-C220}
  \begin{aligned}
 \Big(\sum_{i \geq 2} \fg^{i\bi} (\chi_{i\bi} + \ul{u}_{i\bi})\Big)^{n-1}
 \geq \,& (n-1)^{n-1} \prod_{i \geq 2} \fg^{i\bi}  (\chi_{i\bi}
          + \ul{u}_{i\bi}) \\
 \geq \,& (n-1)^{n-1}\frac{\fg_{1\bar{1}} \det (\chi_{i\bj} + \ul{u}_{i\bj})}
          {(\chi_{1\bar{1}} + \ul{u}_{1\bar{1}}) \det (\fg_{i\bj})} \\
 \geq \,& (n-1)^{n-1} \frac{(\tr \chi + \Delta \ul{u}) \fg_{1\bar{1}}}
          {(\chi_{1\bar{1}} + \ul{u}_{1\bar{1}}) W}.
 \end{aligned}
 \end{equation}
 The second inequality in \eqref{gblq-C220} follows from
\[ \prod (\chi_{i\bi} + \ul{u}_{i\bi}) \geq \det (\chi_{i\bj}
   + \ul{u}_{i\bj}) \]
While the third from the fact that $\ul{u}$ is a subsolution and $u$ a
solution of equation~\eqref{cma2-M10}.

Assume now that $W \geq N$. It follows from equation~\eqref{cma2-M10} that
for any $i = 2, \dots n$,
\begin{equation}
\label{gblq-C230}
 W = \frac{n}{\psi} \fg_{1\bar{1}} \cdots \fg_{n\bn}
     \geq \frac{n}{\psi} \fg_{1\bar{1}} \fg_{i\bi} c_1^{n-2}
     \geq \frac{N c_1^{n-2} \fg_{i\bi}}{\sup \psi}  \equiv A \fg_{i\bi}.
\end{equation}
Therefore,
\begin{equation}
\label{gblq-C240}
 \frac{\fg_{1\bar{1}}}{W} = 1 - \frac{1}{W} \sum_{i \geq 2} \fg_{i\bi}
          \geq 1 - \frac{n-1}{A}.
\end{equation}
On the other hand, by \eqref{gblq-C214},
\[ \frac{\tr \chi + \Delta \ul{u}}{\chi_{1\bar{1}} + \ul{u}_{1\bar{1}}}
      = 1 + \frac{1}{\chi_{1\bar{1}}
        + \ul{u}_{1\bar{1}}} \sum_{i \geq 2}(\chi_{i\bi} + \ul{u}_{i\bi})
   \geq 1 + (n-1) \epsilon^2. \]
Consequently,
\[ \sum \fg^{i\bi} (\chi_{i\bi} + \ul{u}_{i\bi})
   \geq (n-1) (1+ (n-1) \epsilon^2))^{\frac{1}{n-1}}
        (1-(n-1)A^{-1})^{\frac{1}{n-1}}. \]
Note that $F^{i\bi} = \fg^{i\bi} - \frac{1}{W}$,
$F^{i\bi} \fg_{i\bi} = n-1$.
By \eqref{gblq-C215}, fixing $N$ large we derive \eqref{gblq-C210}.
\end{proof}

\section{Gradient estimates}
\label{glq-C1}
\setcounter{equation}{0}
\medskip

In this section we derive \eqref{CH-I30} under assumption~\eqref{CH-I20'}. For this purpose we consider $\phi =A e^{B \eta}$ where $\eta = \ul{u} - u - \inf_M (\ul{u} - u)$
and $A, B$ are positive constants to be determined later.
Suppose the function $e^{\phi} |\nabla u|^2$ attains its maximum at
an interior point $p \in M$ (otherwise we are done). We choose local coordinates around $p$ such that
 $g_{i\bj} = \delta_{ij}$ and $\fg_{i\bj}$ is diagonal at $p$ where, unless
otherwise indicated, the computations below are evaluated.



 For each $i = 1, \ldots, n$, we have
\begin{equation}
\label{gblq-G30}
  \frac{(|\nabla u|^2)_i}{|\nabla u|^2} + \phi_i = 0, \;\;
\frac{(|\nabla u|^2)_{\bi}}{|\nabla u|^2} + \phi_{\bi} = 0
\end{equation}
and
\begin{equation}
\label{gblq-G40}
 \frac{(|\nabla u|^2)_{i\bi}}{|\nabla u|^2}
-  \frac{|(|\nabla u|^2)_{i}|^2}{|\nabla u|^4}
  + \phi_{i\bi} \leq 0.
\end{equation}

A straightforward calculation shows that
\begin{equation}
\label{cma2-M50}
 (|\nabla u|^2)_i =  u_k u_{i\bk} + u_{ki} u_{\bk},
\end{equation}
\begin{equation}
\label{cma2-M60}
\begin{aligned}
(|\nabla u|^2)_{i\bi}
= \, & u_{k \bi} u_{i \bk} + u_{ki} u_{\bk \bi}
         + u_{ki\bi} u_{\bk} + u_k u_{i\bk \bi} \\
 = \, & u_{ki} u_{\bk \bi}+ u_{i\bi k} u_{\bk} + u_{i \bi \bk} u_k
        + R_{i \bi k \bl} u_l u_{\bk} \\
      & +  \sum_k |u_{i\bk} - T^k_{il} u_{\bl}|^2 - \sum_k |T^k_{il}  u_{\bl}|^2.
\end{aligned}
\end{equation}
On the other hand, differentiating equation~\eqref{cma2-M10} gives
\[ F^{i\bi} u_{i\bi k} = f_k - F^{i\bi} \chi_{i\bi k} \]
where $f = \log \psi$.
It follows that
\begin{equation}
\label{bglq-M80}
F^{i\bi} (|\nabla u|^2)_{i\bi}
  \geq F^{i\bi} u_{ki} u_{\bk \bi}
   +   \sum_k F^{i\bi}|u_{i\bk} - T^k_{il} u_{\bl}|^2
   - C |\nabla u|^2 \sum F^{i\bi} - 2 |\nabla u| |\nabla f|.
\end{equation}

By \eqref{gblq-G30} and \eqref{cma2-M50},
\begin{equation}
\label{gblq-G50}
\begin{aligned}
 |(|\nabla u|^2)_i|^2 = \,& |u_{\bk} u_{ki}|^2
 - 2 |\nabla u|^2  \fRe \{u_k u_{i\bk} \phi_{\bi}\} - |u_k u_{i\bk}|^2 \\
     \end{aligned}
\end{equation}
Combining \eqref{gblq-G40}, \eqref{gblq-G50} and \eqref{bglq-M80}, we obtain
\begin{equation}
\label{cma2-M40}
\begin{aligned}
 |\nabla u|^2 F^{i\bi} \phi_{i\bi} +  2 F^{i\bi} \fRe \{u_k u_{i\bk} \phi_{\bi}\}
      \leq 2 |\nabla u| |\nabla f| + C |\nabla u|^2 \sum F^{i\bi}.
  \end{aligned}
\end{equation}

Next,
\[ \phi_i = B \phi \eta_i, \;\; \phi_{i\bi}
    = B \phi (B \eta_i \eta_{\bi} + \eta_{i\bi}). \]
We have
\begin{equation}
\label{gblq-G60'}
\begin{aligned}
 2 \phi^{-1} F^{i\bi} \fRe\{u_k u_{i\bk} \phi_{\bi}\}
     = \,& 2 B F^{i\bi} \fRe\{u_k u_{i\bk} \eta_{\bi}\} \\
     = \,& 2 B F^{i\bi} \fRe\{\fg_{i\bi} u_i \eta_{\bi} - u_k \chi_{i\bk} \eta_{\bi}\} \\
  \geq \,& 2 B F^{i\bi} \fg_{i\bi} \fRe\{u_i \eta_{\bi}\}
           - \frac{B^2}{2} |\nabla u|^2 F^{i\bi} \eta_{i} \eta_{\bi}
           - C \sum F^{i\bi}.
     \end{aligned}
\end{equation}
Therefore, by \eqref{cma2-M40},
\begin{equation}
\label{gblq-G70'}
\begin{aligned}
\frac{B}{2} F^{i\bi} \eta_{i} \eta_{\bi}
 + F^{i\bi} \eta_{i\bi} 
 + \,& 2 F^{i\bi} \fg_{i\bi} \frac{\fRe\{u_i \eta_{\bi}\}}{|\nabla u|^{2}} \\
 \leq \,& \frac{2 |\nabla f|}{B \phi |\nabla u|} +
              C \Big(\frac{1}{B |\nabla u|^{2}} + \frac{1}{B \phi}\Big)
 \sum F^{i\bi}. 
      \end{aligned}
\end{equation}

Suppose now that $W \geq N$ where $N$ is sufficiently large so that
\eqref{gblq-C210} holds. We consider two cases:
(a) $\fg_{j\bj} < c_1$ for some $j$ and
(b) $\fg_{i\bi} \geq c_1$ for all $1 \leq i \leq n$
where $c_1 > 0$ is a (sufficiently small) constant to be determined.

In case (a) we have by Lemma~\ref{gblq-lemma-C20}
\[ F^{i\bi} \eta_{i\bi} \geq \frac{\theta}{2} F^{j\bj}
   + \frac{\theta}{2} \Big(1 + \sum F^{i\bi}\Big)
\geq \frac{\theta}{2} \Big(\frac{1}{c_1} - \frac{1}{N}\Big)
      + \frac{\theta}{2} \Big(1 + \sum F^{i\bi}\Big). \]
Note that
\[  F^{i\bi} \fg_{i\bi} \frac{\fRe\{u_i \eta_{\bi}\}}{|\nabla u|^{2}}
 \leq \frac{|\nabla \eta|}{|\nabla u|} \leq 2 \]
provided that $|\nabla u| \geq |\nabla \ul{u}|$.
So if $c_1$ is chosen sufficiently small,
we derive a bound $|\nabla u| \leq C$ by \eqref{gblq-G70'} when
$B$ is sufficiently large.

In case (b) assume that $\fg_{1\bar{1}} \geq \dots \geq \fg_{n\bn}$.
Then as in \eqref{gblq-C230} we have for all $i \geq 2$
\[  W = \frac{n}{\psi} \fg_{1\bar{1}} \cdots \fg_{n\bn}
     \geq \frac{n}{\psi} \fg_{1\bar{1}} \fg_{i\bi} c_1^{n-2}
     \geq \frac{W c_1^{n-2} \fg_{i\bi}}{\psi}. \]
It follows that
\begin{equation}
\label{gblq-G100}
\fg_{i\bi} \leq  c_1^{2-n} \sup_M \psi \equiv C_1, \;\; 2 \leq i \leq n,
\end{equation}
and
\begin{equation}
\label{gblq-G110}
 2 \sum_{i \geq 2} F^{i\bi} \fg_{i\bi} \frac{\fRe\{u_i \eta_{\bi}\}}{|\nabla u|^{2}}
\geq - \frac{B}{4} \sum_{i \geq 2} F^{i\bi} \eta_{i} \eta_{\bi}
  - \frac{4 C_1^2}{B} \sum_{i \geq 2} F^{i\bi}.
\end{equation}
On the other hand, by \eqref{gblq-G100} we have $W \leq \fg_{1\bar{1}} + (n-1) C_1$
and therefore,
\begin{equation}
\label{gblq-G120}
 F^{1\bar{1}} \fg_{1\bar{1}} = 1 - \frac{\fg_{1\bar{1}}}{W} \leq \frac{(n-1) C_1}{N}.
\end{equation}

Plug \eqref{gblq-G110} and \eqref{gblq-G120} into \eqref{gblq-G70'}
and assume $|\nabla u| \geq |\nabla \ul{u}|$. we obtain
\begin{equation}
\label{gblq-G130}
 F^{i\bi} \eta_{i\bi} \leq
    \frac{4(n-1) C_1}{N} + \frac{2 |\nabla f|}{B \phi |\nabla u|}
     + \Big(\frac{4 C_1^2}{B} + \frac{C}{|\nabla u|^{2}}
     + \frac{C}{B \phi}\Big) \sum F^{i\bi}.
\end{equation}
By Lemma~\ref{gblq-lemma-C20}, this gives a bound $|\nabla u| \leq C$ if we fix
$N$ and $B$ sufficiently large.

Suppose now that $W \leq N$. By equation~\eqref{cma2-M10} we see that
$\fg_{i\bi} \geq c_1$, $1 \leq i \leq n$
for some $c_1 > 0$ depending on $\inf_M \psi$. It follows that
\begin{equation}
\label{gblq-G70}
  \frac{1}{c_1} \geq F^{i\bi} = \fg^{i\bi} - \frac{1}{W}
           = \frac{\fg^{i\bi}}{W} \sum_{j\neq i} \fg_{j\bj}
        \geq \frac{(n-1) c_1}{\fg_{i\bi} N}
        \geq \frac{(n-1) c_1}{N^2} \equiv c_0.
\end{equation}
Thus $F^{i\bi} \eta_{i} \eta_{\bi} \geq c_0 |\nabla \eta|^2$.
Plugging these back in \eqref{gblq-G70'}
we derive $|\nabla \eta| \leq C$ which in turn implies a bound
 $|\nabla u| \leq C$.
This completes the proof of \eqref{CH-I30}.

\bigskip

\section{The second order estimates}
\label{glq-C2}
\setcounter{equation}{0}
\medskip

In this section we derive the second order estimates \eqref{CH-I30'}.
As in the previous section we assume $\ul{u} \in C^2 (\bM)$ satisfies
\eqref{gblq-C214}  and \eqref{CH-I20'}.


\begin{proposition}
\label{gblq-prop-C10}
Let $u \in C^4 (M)$ be a solution of equation~\eqref{CH-I10}
and $W = \Delta u + \tr \chi$.
Suppose $\psi \geq 0$ and $\psi^{\frac{1}{n-1}} \in C^{1,1} (\bM)$.
Then there exists $C > 0$ depending on $\max_{\bM}$, $\epsilon$ in
\eqref{gblq-C214} and
the $C^{1,1}$ norms of $\chi$, $\ul{u}$ and $\psi^{\frac{1}{n-1}}$
as well as the geometric quantities of $M$
such that
\begin{equation}
\label{gblq-I60}
W \leq C (1 + \max_{\partial M} W) \;\; \mbox{on $M$}.
\end{equation}
\end{proposition}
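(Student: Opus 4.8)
The plan is to apply the maximum principle to the quantity $W$ multiplied by an auxiliary exponential weight, in the spirit of the gradient estimate in Section~\ref{glq-C1}. Specifically, I would consider the test function $\log W + \phi$ where $\phi = A e^{B\eta}$ with $\eta = \ul{u} - u - \inf_M(\ul{u}-u) \geq 0$ as before, and $A, B$ large constants to be chosen. If the maximum over $\bM$ of $\log W + \phi$ occurs on $\partial M$ there is nothing to prove, so assume it is attained at an interior point $p$; choose coordinates at $p$ with $g_{i\bj} = \delta_{ij}$ and $\fg_{i\bj}$ diagonal. The first- and second-order criticality conditions give
\[
\frac{W_i}{W} + \phi_i = 0, \qquad
\frac{W_{i\bi}}{W} - \frac{|W_i|^2}{W^2} + \phi_{i\bi} \leq 0,
\]
and I would contract the second with $F^{i\bi} \geq 0$.

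The heart of the computation is to differentiate equation~\eqref{cma2-M10}, or rather the concave function $G = \log\det\fg_{i\bj} - \log W$, twice. Writing $f = \log\psi$, the first derivative of $\log G = f - \log(n\det g)$-type identity gives $F^{i\bi} u_{i\bi k} = f_k - F^{i\bi}\chi_{i\bi k}$, and differentiating once more and tracing against $F$ produces a term $F^{i\bi}(\log W)_{i\bi}$ plus the concavity (second-derivative) term of $G$, which is nonnegative. Commuting covariant derivatives via the formulas in \eqref{gblq-B147} converts $u_{i\bi k\bk}$ into $u_{k\bk i\bi}$ plus curvature terms $R_{i\bi k\bl}u_{l\bk}$-type expressions and torsion-quadratic terms; the curvature contributions are controlled by $C W \sum F^{i\bi}$ since $|u_{k\bl}| \leq W$, and the crucial torsion terms of the form $T^{\cdot}_{\cdot\cdot}u_{p\bq\bl}$ are the genuinely new difficulty here and must be absorbed. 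The regularity hypothesis $\psi^{1/(n-1)} \in C^{1,1}$ enters precisely to bound the second derivatives of $f$ appearing in $F^{i\bi}f_{i\bi}$: because $G$ is concave in $\fg$, the "bad" term is really $\Delta f$-like and controlled after using that $f = (n-1)\log\psi^{1/(n-1)}$; this is why $C$ depends only on $|\psi^{1/(n-1)}|_{C^{1,1}}$ and not on $\inf\psi$, and why the estimate survives the degenerate case $\psi \geq 0$.

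After these reductions the inequality takes the schematic form
\[
0 \geq -\frac{C}{W}\sum F^{i\bi} - C + B\phi\, F^{i\bi}\eta_{i\bi}
  + B\phi\big(B\phi - C\big) F^{i\bi}\eta_i\eta_{\bi} - \frac{C}{W}|\nabla f|^2 \cdots,
\]
and the plan is to invoke Lemma~\ref{gblq-lemma-C20}: once $W \geq N$ we have $F^{i\bi}\eta_{i\bi} = F^{i\bi}(\ul{u}_{i\bi} - u_{i\bi}) \geq \theta(1 + \sum F^{i\bi})$, so the term $B\phi\, F^{i\bi}\eta_{i\bi}$ dominates the error terms $C(1 + \tfrac1W\sum F^{i\bi})$ once $B$ and then $A$ are fixed large. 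This forces $W \leq C$ at $p$ unless some $\fg_{i\bi}$ is small, which one handles exactly as in case (a) of the gradient estimate. The main obstacle I anticipate is the torsion third-derivative terms in the commutation of $u_{i\bi k\bk}$: unlike the K\"ahler case of \cite{FLM11} they do not vanish, and one must arrange the Cauchy–Schwarz splitting so that the quadratic-in-third-derivative torsion terms are swallowed either by the positive term $F^{i\bi}\sum_k|u_{i\bk k}|^2/W$ coming from $|W_i|^2/W^2$ together with the concavity term, or by the $B\phi F^{i\bi}\eta_i\eta_{\bi}$ term — this bookkeeping, together with making the estimate independent of the gradient bound, is the delicate part.
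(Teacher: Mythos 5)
Your overall strategy matches the paper's: maximize $\log W + \phi$ (equivalently $e^{\phi}W$) with $\phi$ an exponential of $\eta = \ul u - u + \text{const}$, differentiate the equation twice, commute, and invoke Lemma~\ref{gblq-lemma-C20} once $W$ is large. You also correctly identify that $\psi^{1/(n-1)}\in C^{1,1}$ must be used to bound $\Delta f$ with $f=\log\psi$, which is why the estimate survives the degenerate case. But the proposal leaves the two genuinely new technical points unresolved, and a third point is stated inaccurately.

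First, the torsion third-derivative terms. You correctly flag them as the central difficulty, but the proposal offers only the vague plan of ``swallowing'' them by the positive term from $|W_i|^2/W^2$ or by $B\phi F^{i\bi}\eta_i\eta_{\bi}$. That will not work directly; the absorption is not a Cauchy--Schwarz split but an exact completion of a square. When you substitute the commutator formula
$\fg_{i\bi k\bk}-\fg_{k\bk i\bi} = R_{k\bk i\bi}\fg_{i\bi}-R_{i\bi k\bk}\fg_{k\bk}+2\fRe\{\ol{T_{ik}^j}\fg_{i\bj k}\}-|T_{ik}^j|^2\fg_{j\bj}-G_{i\bi k\bk}$
into the twice-differentiated equation, the cross term $2\fRe\{\ol{T_{ik}^j}\fg_{i\bj k}\}$ combines with the concavity term $-\fg^{i\bi}\fg^{j\bj}\fg_{i\bj k}\fg_{j\bi\bk}$ and the quadratic torsion term to produce exactly
$-\sum_{i,j,k}\fg^{i\bi}\fg^{j\bj}|\fg_{i\bj k}-T_{ik}^j\fg_{j\bj}|^2$,
which in turn dominates the corresponding torsion-corrected expansion of $|W_i|^2$ via Schwarz. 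Without spotting this identity your schematic inequality has uncontrolled third-derivative terms.

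Second, the claimed dependence on $|\psi^{1/(n-1)}|_{C^{1,1}}$ alone is not justified. Your explanation addresses only the $\Delta f$ term, but $\psi$ (not $\psi^{1/(n-1)}$) enters elsewhere. The paper needs two further inequalities: $F^{j\bj}\geq \psi^{1/(n-1)}/(nW^2)$, used to control the term $\sum 1/F^{i\bi}$ that arises from expanding $\phi_i\ol{\lambda_i}$, and $\sum\fg^{i\bi}\geq (n-1)/(n\psi^{1/(n-1)})$ used at the very end to convert $\frac{\theta}{2}AW\sum\fg^{i\bi}\leq C\psi^{1/(1-n)}W-\Delta f$ into a bound on $W$. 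Neither appears in your proposal, and without them the estimate degenerates at points where $\psi$ is small.

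Third, the concluding step ``unless some $\fg_{i\bi}$ is small, handled as in case (a) of the gradient estimate'' does not reflect how the paper closes the argument for the Laplacian bound. Once $W\geq N$ and Lemma~\ref{gblq-lemma-C20} applies, the paper derives the bound directly from \eqref{gblq-C150a}--\eqref{gblq-C150b} together with the two $\psi^{1/(n-1)}$ inequalities above; there is no two-case split analogous to the gradient estimate. If you insisted on a case split you would also have to verify that the error terms — which no longer carry the helpful $|\nabla u|^2$ factor present in Section~\ref{glq-C1} — can still be controlled, and that is not automatic.
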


\begin{proof}
We assume $\psi > 0$; the general case $\psi \geq 0$ follows from
approximation.
Let $\varPhi = e^\phi W$ where $\phi$
is a function to be determined.
Suppose $\varPhi$ achieves its maximum at an interior point $p \in M$.
Choose local coordinates around $p$  such that $g_{i\bj} = \delta_{ij}$
and $\fg_{i\bj}$ is diagonal at $p$.
 we have (all calculations  below are done at $p$),
\begin{equation}
\label{gblq-C80}
  \frac{W_i}{W} + \phi_i = 0, \;\;
\frac{W_{\bi}}{W} + \phi_{\bi} = 0,
\end{equation}
\begin{equation}
\label{gblq-C90}
 \frac{W_{i\bi}}{W}
-  \frac{|W_{i}|^2}{W^2}
  + \phi_{i\bi} \leq 0.
\end{equation}

Now,
\begin{equation}
\label{gblq-C905a}
  \begin{aligned}
 |W_i|^2 = \,& \Big|\sum_j \fg_{j\bj i}\Big|^2
         =    \Big|\sum_j (\fg_{i\bj j} - T_{ij}^j \fg_{j\bj}) + \lambda_i\Big|^2 \\
         = \,& \Big|\sum_j (\fg_{i\bj j} - T_{ij}^j \fg_{j\bj})\Big|^2
             + 2 \sum_j \fRe\{(\fg_{i\bj j} - T_{ij}^j \fg_{j\bj}) \ol{\lambda_i}\}
               + |\lambda_i|^2
         \end{aligned}
\end{equation}
where
\[ \lambda_{i} = \sum_j (\chi_{j\bj i} - \chi_{i\bj j} + T_{ij}^l \chi_{l\bj}). \]
By Schwarz inequality,
\begin{equation}
\label{gblq-C905b}
\Big|\sum_j (\fg_{i\bj j} - T_{ij}^j \fg_{j\bj})\Big|^2
\leq W \sum_j \fg^{j\bj} |\fg_{i\bj j} - T_{ij}^j \fg_{j\bj}|^2.
\end{equation}

By \eqref{gblq-C80} we write
\[ 2 \sum_j \fRe\{(\fg_{i\bj j} - T_{ij}^j \fg_{j\bj}) \ol{\lambda_i}\}
   = 2 \fRe\{(W_i - \lambda_i) \ol{\lambda_i}\}
   = - 2 W \fRe\{\phi_i  \ol{\lambda_i}\} - 2 |\lambda_i|^2. \]
From 
\eqref{gblq-C905a} and \eqref{gblq-C905b} we see that
\begin{equation}
\label{gblq-C905}
  \frac{|W_i|^2}{W} \leq \fg^{j\bj} |\fg_{i\bj j} - T_{ij}^j \fg_{j\bj}|^2
         - 2 \fRe\{\phi_i  \ol{\lambda_i}\}.
 \end{equation}

Differentiating equation~\eqref{cma2-M10} twice we obtain (at $p$)
\begin{equation}
\label{gblq-C75}
\fg^{i\bi} \fg_{i\bi k\bk} - \fg^{i\bi} \fg^{j\bj} \fg_{i\bj k} \fg_{j\bi \bk}
- \frac{W_{k\bk}}{W} +
 \frac{|W_k|^2}{W^2}= f_{k\bk}.
\end{equation}
From \eqref{gblq-B147}, 
\begin{equation}
\label{gblq-R155}
 \begin{aligned}
\fg_{i \bi k \bk} - \fg_{k \bk i \bi}
   = \,& R_{k\bk i\bi} \fg_{i\bi} - R_{i\bi k\bk} \fg_{k\bk}
         + 2 \fRe\{\ol{T_{ik}^j} \fg_{i\bj k}\}
         - |T_{ik}^j|^2 \fg_{j\bj} - G_{i\bi k\bk}
  \end{aligned}
 \end{equation}
where $G_{i\bi k\bk} = \chi_{k \bk i \bi} - \chi_{i \bi k \bk}
     +  R_{k\bk i\bp} \chi_{p\bi} - R_{i\bi k\bp} \chi_{p\bk}
     + 2 \fRe\{\ol{T_{ik}^j} \chi_{i\bj k}\}
     - T_{ik}^p \ol{T_{ik}^q} \chi_{p\bq}$.
Combining \eqref{gblq-C75} and \eqref{gblq-R155} gives
\begin{equation}
\label{gblq-C120}
\begin{aligned}
F^{i\bi} W_{i\bi}
   = 
     \,& \sum_{i,j,k} \fg^{i\bi} \fg^{j\bj} |\fg_{i\bj k} - T_{ik}^j  \fg_{j\bj}|^2
         - \frac{|\nabla W|^2}{W^2} \\
       & \;\;\;\;\;\;\;\;\;\;\;\;
          + \Delta f + (\fg^{i\bi} \fg_{k\bk} - 1) R_{i\bi k\bk}
         + \fg^{i\bi} G_{i\bi k\bk} \\
\geq \,& \fg^{i\bi} \fg^{j\bj} |\fg_{i\bj j} - T_{ij}^j  \fg_{j\bj}|^2
         - \frac{|\nabla W|^2}{W^2}
        + \Delta f - (C_1 W + C_2) \sum \fg^{i\bi} - C_3
       \end{aligned}
\end{equation}
where
\[ C_1 = - \inf_M \inf_{i,j} R_{i\bi j\bj}, \;\; 
   C_2 = \inf_M \inf_i \sum_k G_{i\bi k\bk}, \;\;
   C_3 = \sup_M \sum_{i,j} R_{i\bi j\bj}. \]
By \eqref{gblq-C90}, \eqref{gblq-C905} and \eqref{gblq-C120},
\begin{equation}
\label{gblq-C150}
\begin{aligned}
0 \geq \,& F^{i\bi} W_{i\bi} - \frac{\fg^{i\bi} |W_i|^2}{W}
           + \frac{|\nabla W|^2}{W^2} + W F^{i\bi} \phi_{i\bi} \\
  \geq \,& W F^{i\bi} \phi_{i\bi} + 2 \fg^{i\bi} \fRe\{\phi_i \ol{\lambda_i}\}
           - C W\sum \fg^{i\bi} + \Delta f.
\end{aligned}
\end{equation}

Let $\phi = e^{A \eta}$ where $\eta = \ul{u} - u + \sup_M (u - \ul{u})$ and
$A$ is a positive constant. We see that $\phi_i = A \phi \eta_i$ and
$\phi_{i\bi} =  A \phi \eta_{i\bi} + A^2 \phi \eta_i \eta_{\bi}$. Therefore,
applying Schwarz inequality,
\begin{equation}
\label{gblq-C200a}
  \begin{aligned}
2 \fg^{i\bi} \fRe\{\phi_i \ol{\lambda_i}\}
     = \,& 2 A \phi F^{i\bi} \fRe\{\eta_i \ol{\lambda_i}\}
           + 2 A \phi W^{-1} \fRe\{\eta_i \ol{\lambda_i}\} \\
 \geq \,& - A^2 \phi F^{i\bi} |\eta_i|^2 - C \phi \sum F^{i\bi}
          - \frac{C \phi}{W^2} \sum \frac{1}{F^{i\bi}} \\
 \geq \,& - W A^2 \phi F^{i\bi} |\eta_i|^2 - C \phi \Big(\psi^{\frac{1}{1-n}} + \sum F^{i\bi}\Big).
\end{aligned}
\end{equation}
The last inequality in \eqref{gblq-C200a} follows from
\[ \begin{aligned}
 F^{j\bj}
    = \,& \fg^{j\bj} - \frac{1}{W}
    =     \frac{\fg^{j\bj}}{W} \sum_{k \neq j} \fg_{k\bk}
 \geq     \frac{\fg^{j\bj}}{W}
              \Big(\prod_{k \neq j} \fg_{k\bk}\Big)^{\frac{1}{n-1}} \\
    = \,&  \frac{1}{W} (\fg^{j\bj})^{\frac{n}{n-1}} (\det \fg_{k\bl})^{\frac{1}{n-1}}
    =     \frac{1}{W^{\frac{n-2}{n-1}}} (\fg^{j\bj})^{\frac{n}{n-1}}
          \Big(\frac{\psi}{n}\Big)^{\frac{1}{n-1}}
 \geq     \frac{\psi^{\frac{1}{n-1}}}{n W^2}.
\end{aligned} \]

Suppose $W$ is sufficiently large. Then by Lemma~\ref{gblq-lemma-C20}
\begin{equation}
\label{gblq-C200}
  \begin{aligned}
\frac{1}{A \phi} F^{i\bi} \phi_{i\bi}
      = \,& F^{i\bi} \eta_{i\bi} + A F^{i\bi} \eta_i \eta_{\bi} \\
   \geq \,& A F^{i\bi} \eta_i \eta_{\bi} + \theta \Big(1 + \sum F^{i\bi}\Big).
 \end{aligned}
 \end{equation}
It follows from \eqref{gblq-C150}, \eqref{gblq-C200a} and \eqref{gblq-C200}
that
\begin{equation}
\label{gblq-C150a}
\begin{aligned}
\theta A (W - C) - C \psi^{\frac{1}{1-n}} W + \phi^{-1} \Delta f
     + W (\theta A - C - C \phi^{-1}) \sum \fg^{i\bi} \leq 0
\end{aligned}
\end{equation}
provided that $W$ is large enough. We obtain when $A$ is sufficiently large,
\begin{equation}
\label{gblq-C150b}
\frac{\theta}{2} A W \sum \fg^{i\bi} \leq
          C \psi^{\frac{1}{1-n}} W  - \Delta f.
 \end{equation}

Note that $|\Delta f| \leq C \psi^{\frac{1}{n-1}}$ where $C$ depends on
$|\psi^{\frac{1}{n-1}}|_{C^{1,1} (\bM)}$; see e.g. \cite{GTW99}.
Now assume that $\fg_{1\bar{1}} \geq \dots \geq \fg_{n\bn}$
(and therefore, $n \fg_{1\bar{1}} \geq W$).
As in \eqref{gblq-C220} we have
\begin{equation}
\label{gblq-C220'}
  \begin{aligned}
 \sum \fg^{i\bi}
 \geq \,& (n-1) \prod_{i \geq 2} (\fg^{i\bi})^{\frac{1}{n-1}}
     = \frac{(n-1) \fg_{1\bar{1}}^{\frac{1}{n-1}}}
          {\det (\fg_{i\bj})^{\frac{1}{n-1}}}
 \geq \frac{n-1}{n \psi^{\frac{1}{n-1}}}.
 \end{aligned}
 \end{equation}
Fixing $A$ sufficiently large we obtain a bound for $W$ at $p$ from \eqref{gblq-C150b}.
\end{proof}

\bigskip

\section{Boundary estimates for second derivatives}
\label{gblq-B}
\setcounter{equation}{0}
\medskip

In this section we derive
{\em a priori} estimates for second derivatives on the boundary
\begin{equation}
 \label{cma-37}
\max_{\partial M} |\nabla^2 u| \leq C.
\end{equation}

Let $u \in C^3 (\bM)$ and $\ul{u} \in C^2 (\bM)$ be an admissible
solution and subsolution of the Dirichlet problem~\eqref{CH-I10},
respectively.

\begin{proposition}
\label{gblq-prop-C10}
 Assume $\psi \in C^2 (\bM)$, $\psi > 0$
and $\varphi \in C^4  (\partial M)$ and that there exists an admissible
subsolution $\ul{u} \in C^2 (\bM)$ satisfying \eqref{CH-I20}.
Then there exists $C > 0$ depending on $\max_{\bM}$, $\epsilon$ in
\eqref{gblq-C214} and
the $C^{1,1}$ norms of $\chi$, $\ul{u}$ and $\psi^{\frac{1}{n-1}}$
as well as the geometric quantities of $M$
such that
\begin{equation}
\label{gblq-I60}
W \leq C (1 + \max_{\partial M} W) \;\; \mbox{on $M$}.
\end{equation}
\end{proposition}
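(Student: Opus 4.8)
The plan is to note first that \eqref{gblq-I60} is a \emph{global} estimate involving neither $\varphi$ nor the boundary, and that it follows from exactly the interior maximum-principle argument already carried out for the Proposition in Section~\ref{glq-C2}. That argument uses the subsolution only through the strict concavity estimate~\eqref{gblq-C210} (together with~\eqref{gblq-C214}), and it requires $\psi^{\frac{1}{n-1}}\in C^{1,1}(\bM)$. All of this holds under the present hypotheses: $\ul{u}\in C^2(\bM)$ is admissible, so $\chi_{\ul{u}}>0$ and \eqref{gblq-C214} holds; $\psi\in C^2(\bM)$ with $\psi>0$ on the compact $\bM$ gives $\inf_{\bM}\psi>0$ and hence $\psi^{\frac{1}{n-1}}\in C^2(\bM)\subset C^{1,1}(\bM)$; and \eqref{gblq-C210} is valid because \eqref{CH-I20} is precisely the assumption under which Lemma~\ref{gblq-lemma-C20} was proved above. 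So I would reproduce that argument. Set $\varPhi=e^{\phi}W$ with $W=\Delta u+\tr\chi$ and $\phi=e^{A\eta}$, $\eta=\ul{u}-u+\sup_M(u-\ul{u})\ge0$, $A$ a large constant to be chosen. If $\varPhi$ attains its maximum on $\partial M$ we are done; otherwise let $p$ be an interior maximum point and choose coordinates with $g_{i\bj}(p)=\delta_{ij}$ and $\fg_{i\bj}(p)$ diagonal, all computations being at $p$, so that \eqref{gblq-C80} and \eqref{gblq-C90} hold.

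Next I would expand the third-order terms. Writing $\fg_{j\bj i}=\fg_{i\bj j}-T_{ij}^j\fg_{j\bj}+\lambda_i$ as in~\eqref{gblq-C905a}, the commutation identities~\eqref{gblq-B147} and Schwarz's inequality give~\eqref{gblq-C905}; the crucial point is that the torsion terms $\sum_jT_{ij}^j\fg_{j\bj}$ are absorbed into the nonnegative quantity $\fg^{j\bj}|\fg_{i\bj j}-T_{ij}^j\fg_{j\bj}|^2$, leaving only the harmless term $\fRe\{\phi_i\ol{\lambda_i}\}$. Differentiating equation~\eqref{cma2-M10} twice and commuting derivatives by~\eqref{gblq-B147} yields \eqref{gblq-C75}--\eqref{gblq-C120}; combining these with~\eqref{gblq-C90} and~\eqref{gblq-C905} produces
\[ 0\ \ge\ W F^{i\bi}\phi_{i\bi}+2\fg^{i\bi}\fRe\{\phi_i\ol{\lambda_i}\}-CW\sum\fg^{i\bi}+\Delta f,\qquad f=\log\psi. \]
Then, using $\phi_i=A\phi\eta_i$ and $\phi_{i\bi}=A\phi\eta_{i\bi}+A^2\phi\eta_i\eta_{\bi}$, estimating the cross term by Schwarz's inequality (with $F^{j\bj}\ge\psi^{\frac{1}{n-1}}/(nW^2)$ to control $\sum1/F^{i\bi}$), and invoking~\eqref{gblq-C210} once $W$ is large, I would reach $\frac{\theta}{2}AW\sum\fg^{i\bi}\le C\psi^{\frac{1}{1-n}}W-\Delta f$ for $W$ and $A$ large. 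Since $|\Delta f|\le C\psi^{\frac{1}{n-1}}$ with $C$ controlled by $|\psi^{\frac{1}{n-1}}|_{C^{1,1}(\bM)}$, while AM--GM on the eigenvalues of $\fg_{i\bj}$ combined with~\eqref{cma2-M10} gives $\sum\fg^{i\bi}\ge(n-1)/(n\psi^{\frac{1}{n-1}})$, fixing $A$ large then forces $W(p)\le C$, where $C$ has the dependence asserted in the statement (in particular, no dependence on a gradient bound for $u$). Hence $\varPhi\le\varPhi(p)=e^{\phi(p)}W(p)\le C$ on $M$, and since $\phi\ge1$ this is \eqref{gblq-I60}.

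The main obstacle is the torsion. On a K\"ahler manifold $F^{i\bi}W_{i\bi}$ dominates outright the dangerous term $\fg^{i\bi}|W_i|^2/W$ occurring in~\eqref{gblq-C90}, and the computation is short. Here one must instead carry the extra terms $T_{ij}^j\fg_{j\bj}$ and $\lambda_i$ through the expansion of $|W_i|^2$ in~\eqref{gblq-C905a} and through the commutators in the twice-differentiated equation~\eqref{gblq-R155}, and check that each of them is absorbed either into the good square $\fg^{i\bi}\fg^{j\bj}|\fg_{i\bj j}-T_{ij}^j\fg_{j\bj}|^2$ coming from $F^{i\bi}W_{i\bi}$ or into the lower-order quantity $\fRe\{\phi_i\ol{\lambda_i}\}$ (which in turn is controlled using~\eqref{gblq-C80}). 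Once this bookkeeping is in place, the remainder is the standard maximum-principle argument, with the subsolution entering solely through the strict concavity estimate~\eqref{gblq-C210}.
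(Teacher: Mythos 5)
Your proposal is correct and takes essentially the same route as the paper: the estimate \eqref{gblq-I60} is obtained there by exactly this maximum-principle argument applied to $e^{\phi}W$ with $\phi=e^{A\eta}$, the torsion terms absorbed as in \eqref{gblq-C905a}--\eqref{gblq-C905} and \eqref{gblq-R155}, and the subsolution entering only through Lemma~\ref{gblq-lemma-C20} (which, as you note, is proved in the paper under \eqref{CH-I20}). Your reduction of the present hypotheses ($\psi\in C^2(\bM)$, $\psi>0$, hence $\psi^{\frac{1}{n-1}}\in C^{1,1}(\bM)$, and \eqref{gblq-C214} from admissibility of $\ul{u}$) to those used in the global second-order estimate of Section~\ref{glq-C2} is accurate, so nothing further is needed.
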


As in \cite{GL10} we follow the methods of \cite{GS93}, \cite{Guan98a},
\cite{Guan98b} using subsolutions in construction of barrier functions.
The new technical tool needed in this approach is
Lemma~\ref{gblq-lemma-C20}.

To derive \eqref{cma-37} let us
consider a boundary point $0 \in \partial M$.
We use normal coordinates around $0$ such that
$x_n$ is the interior normal direction to
$\partial M$ at $0$.
For convenience we set
\[ 
t_{2k-1} = x_k,\;  t_{2k} = y_k, \, 1 \leq k \leq n-1; \;
  t_{2n-1} = y_n, \; t_{2n} = x_n\]
and write for a function $v \in C^3 (\bM)$
\[ v_{t_{\alpha}} = \partial_{t_{\alpha}} v, \;
 v_{i t_{\alpha}} = \nabla_{t_{\alpha} i} v
   := \partial_{t_{\alpha}} \partial_i v - \partial_{\nabla_{t_{\alpha}} \partial_i} v, \;
   v_{\bi t_{\alpha}} = \nabla_{t_{\alpha} \bi} v, \; \mbox{etc.} \]
We have
\begin{equation}
\label{gblq-B110}
\begin{aligned}
  v_{i x_j} -  v_{x_j i} = \,&  v_{ij} - v_{ji}  = T_{ij}^l v_l, \\
v_{i y_j} -  v_{y_j i} = \,& \sqrt{-1} (v_{ij} - v_{ji})
                           = \sqrt{-1} T_{ij}^l v_l.
\end{aligned}
\end{equation}
Similarly, $v_{\bi x_j} -  v_{x_j \bi} = \ol{T_{ij}^l} v_{\bl}$
and $v_{\bi y_j} -  v_{y_j \bi} = -  \sqrt{-1} \ol{T_{ij}^l} v_{\bl}$.
Moreover, by \eqref{gblq-B147} one derives
\begin{equation}
\label{gblq-B120}
\begin{aligned}
  v_{i \bj x_k} - v_{x_k i \bj}
    = \,& - g^{l\bm} R_{i \bj k \bm} v_l + T_{ik}^l v_{l\bj}
          + \ol{T_{jk}^l} v_{i\bl}, \\
  v_{i \bj y_k} - v_{y_k i \bj}
    = \,& \sqrt{-1} (- g^{l\bm} R_{i \bj k \bm} v_l + T_{ik}^l v_{l\bj}
                       - \ol{T_{jk}^l} v_{i\bl}).
  \end{aligned} 
\end{equation}

Since $u - \varphi = 0$ on $\partial M$, one derives
\begin{equation}
|u_{t_{\alpha} t_{\beta}}(0)| \leq C, \;\;\;\; \alpha, \beta < 2n
\label{cma-70}
\end{equation}
where $C$ depends on $|u|_{C^1 (\bM)}$, $|\ul{u}|_{C^1 (\bM)}$, and
the principal curvatures of $\partial M$.

To estimate $u_{t_{\alpha} x_n} (0)$ for $\alpha \leq 2n$,
we shall employ a barrier function of the form
\begin{equation}
\label{cma-E85}
v = (u - \ul{u}) + t \sigma - T \sigma^2
\;\; \mbox{in $\Omega_{\delta} = M \cap B_{\delta}$}
\end{equation}
where $t, T$ are positive constants to be determined,
$B_{\delta}$ is the
(geodesic) ball of radius $\delta$ centered at $p$,
and $\sigma$ is the distance function to $\partial M$.
Note that $\sigma$ is smooth in
$M_{\delta_0} := \{z \in M: \sigma (z) < \delta_0\}$
for some $\delta_0 > 0$.

\begin{lemma}
\label{cma-lemma-20}
There exists $c_0 > 0$ such that for $T$ sufficiently large and
$t, \delta$ sufficiently small,
$v \geq 0$ and
\begin{equation}
\label{eq-100}
  F^{i\bj} v_{i\bj}
    \leq - c_0 (1 + F^{i\bj} g_{i\bj})
   \;\;\; \mbox{in} \;\; \Omega_{\delta}.
\end{equation}
\end{lemma}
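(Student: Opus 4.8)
The plan is to construct the barrier function $v$ in \eqref{cma-E85} and estimate $F^{i\bj} v_{i\bj}$ term by term. First I would address nonnegativity of $v$: on $\partial M$ we have $u - \ul{u} = 0$ and $\sigma = 0$, so $v = 0$ there; since $\sigma \leq \delta_0$ in $M_{\delta_0}$, choosing $T$ with $T\sigma \leq t/2$ on $\Omega_\delta$ (i.e. $\delta$ small relative to $t/T$) makes $t\sigma - T\sigma^2 \geq t\sigma/2 \geq 0$, and $u - \ul{u} \geq 0$ follows from the comparison principle noted in the excerpt (concavity of $\log\det A - \log\tr A$ gives $F^{i\bj}(\ul{u}_{i\bj} - u_{i\bj}) \geq 0$, hence $u - \ul{u}$ attains its minimum on the boundary where it vanishes). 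Actually since we only need $v \geq 0$ in $\Omega_\delta$ with $\ul u = \varphi = u$ on $\partial M \cap B_\delta$, this is clean.

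For the differential inequality, I would expand $F^{i\bj} v_{i\bj} = F^{i\bj}(u_{i\bj} - \ul{u}_{i\bj}) + t F^{i\bj}\sigma_{i\bj} - 2T\sigma F^{i\bj}\sigma_{i\bj} - 2T F^{i\bj}\sigma_i\sigma_{\bj}$. The last term $-2T F^{i\bj}\sigma_i\sigma_{\bj}$ is the crucial negative term: because $|\nabla\sigma| = 1$, one shows $F^{i\bj}\sigma_i\sigma_{\bj} \geq c(1 + F^{i\bj}g_{i\bj})$ on some region — more precisely, either $W$ is bounded (so all $F^{i\bj}$ are comparable and the quadratic form is coercive) or $W$ is large and one splits into cases using Lemma~\ref{gblq-lemma-C20}. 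The terms $t F^{i\bj}\sigma_{i\bj}$ and $-2T\sigma F^{i\bj}\sigma_{i\bj}$ are controlled by $C(t + T\delta) F^{i\bj}g_{i\bj}$, which is absorbed by the negative term once $t, \delta$ are small. The term $F^{i\bj}(u_{i\bj} - \ul{u}_{i\bj}) \leq -F^{i\bj}(\ul{u}_{i\bj} - u_{i\bj}) \leq 0$ by concavity, so it only helps; when $W$ is large, Lemma~\ref{gblq-lemma-C20} upgrades this to $\leq -\theta(1 + F^{i\bj}g_{i\bj})$, which by itself already gives the conclusion with room to spare.

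The case analysis on $W$ is where the real work sits. When $W \leq N$: all eigenvalues $\fg_{i\bi}$ are pinched between two positive constants (lower bound from equation~\eqref{cma2-M10} since $\det\fg_{i\bj} = \frac{\psi}{n} W\det g_{i\bj}$ and $W$ is bounded above and below — the lower bound on $W$ uses $\chi_u > 0$), so $F^{i\bj}$ is a bounded positive-definite matrix with bounded inverse, and $F^{i\bj}\sigma_i\sigma_{\bj} \geq c_0|\nabla\sigma|^2 = c_0 > 0$ while $F^{i\bj}g_{i\bj}$ is bounded, giving \eqref{eq-100} directly from the $-2T F^{i\bj}\sigma_i\sigma_{\bj}$ term for $T$ large. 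When $W \geq N$: invoke Lemma~\ref{gblq-lemma-C20} to get $F^{i\bj}(\ul u_{i\bj} - u_{i\bj}) \geq \theta(1 + F^{i\bj}g_{i\bj})$; then $F^{i\bj}v_{i\bj} \leq -\theta(1 + F^{i\bj}g_{i\bj}) + C(t + T\delta)(1 + F^{i\bj}g_{i\bj})$, and choosing $t, \delta$ small so that $C(t + T\delta) \leq \theta/2$ finishes with $c_0 = \theta/2$. The main obstacle I anticipate is the bookkeeping in the transition between the two $W$-regimes — specifically, making sure the constant $T$ chosen in the bounded-$W$ case is compatible with the smallness requirements on $t$ and $\delta$ in the large-$W$ case, and handling the geometric error terms in $\sigma_{i\bj}$ (which involve the torsion and curvature of $(M,\omega)$, hence depend only on geometric quantities of $M$) uniformly across both cases. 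One picks $N$ first (from Lemma~\ref{gblq-lemma-C20}), then $T$ (large, from the bounded-$W$ case and to dominate in the large-$W$ case), then finally $t$ and $\delta$ small.
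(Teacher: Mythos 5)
Your proposal follows the paper's proof essentially step for step: expand $F^{i\bj}v_{i\bj}$ via \eqref{eq-110}, keep the good term $-2TF^{i\bj}\sigma_i\sigma_{\bj}$, split on $W\le N$ versus $W>N$, use a lower eigenvalue bound from the equation to make $F^{i\bj}\sigma_i\sigma_{\bj}$ coercive in the first case and Lemma~\ref{gblq-lemma-C20} in the second, with the choice order $N$, then $T$, then $t,\delta$. The only cosmetic difference is that in the bounded-$W$ case the paper replaces your ``concavity gives $F^{i\bj}(u_{i\bj}-\ul u_{i\bj})\le 0$ and $F^{i\bj}g_{i\bj}$ is bounded'' by the slightly sharper bound $F^{i\bj}(u_{i\bj}-\ul u_{i\bj})\le n-1-\epsilon F^{i\bj}g_{i\bj}$ coming from $F^{i\bj}\fg_{i\bj}=n-1$ and \eqref{gblq-C215}; both routes close the argument.
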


\begin{proof}
Note that, since $\sigma$ is smooth and $\sigma = 0$ on $\partial M$,
for fixed $t$ and $T$ we may require $\delta$ to be so small that
$v \geq 0$ in $\Omega_{\delta}$. Note also that
\[ F^{i\bj} \sigma_{i\bj} \leq C_1 F^{i\bj}  g_{i\bj} \]
for some constant $C_1 > 0$ under control. Therefore,
\begin{equation}
\label{eq-110}
  F^{i\bj} v_{i\bj} \leq F^{i\bj} (u_{i\bj} -\ul{u}_{i\bj})
    + C_1 (t + T \sigma) F^{i\bj} g_{i\bj} - 2 T F^{i\bj} \sigma_i \sigma_{\bj}
\;\; \mbox{in} \;\; \Omega_{\delta}.
\end{equation}

Fix $N > 0$ sufficiently large so that Lemma~\ref{gblq-lemma-C20}
holds. At a fixed point in $\Omega_{\delta}$, we consider two cases:
(a) $W \leq N$ and (b) $W > N$.

In case (a) let $\lambda_1 \leq \cdots \leq \lambda_n$ be
the eigenvalues of $\{\fg_{i\bj}\}$. 
We see from equation~ \eqref{cma2-M10}
that there is a uniform lower bound $\lambda_1 \geq c_1 > 0$.
Consequently,
\begin{equation}
\label{cma-E95}
F^{i\bj} \sigma_i \sigma_{\bj}
 \geq \Big(\frac{1}{\lambda_n} - \frac{1}{W}\Big) |\nabla \sigma|^2
  \geq \frac{c_1 (n-1)}{\lambda_n W} |\nabla \sigma|^2
  \geq \frac{c_1 (n-1)}{N^2} |\nabla \sigma|^2 = \frac{c_1 (n-1)}{4 N^2}.
\end{equation}

Let $T = \frac{2 n N^2}{c_1 (n-1)}$. By \eqref{eq-110} and \eqref{gblq-C210},
\begin{equation}
\label{cma-E90}
F^{i\bj} v_{i\bj} \leq n -1 + (C_1 (t + T \sigma) - \epsilon) F^{i\bj} g_{i\bj}
- \frac{c_1 (n-1)T}{2 N^2}
  \leq - \frac{\epsilon}{2} (1 + F^{i\bj} g_{i\bj})
\end{equation}
if we require $t$ and $\delta$ small enough to satisfy
$C_1 (t + T \delta) \leq \epsilon/2$.

Suppose now that $W > N$.
By Lemma~\ref{gblq-lemma-C20} and \eqref{eq-110}, we may further
require $t$ and $\delta$ to satisfy $C_1 (t + T \delta) \leq \theta/2$
so that \eqref{eq-100} holds.
\end{proof}

Using Lemma~\ref{cma-lemma-20} we may derive the estimates
$|u_{t_{\alpha} x_n} (0)| \leq C$ (and therefore
$|u_{x_n t_{\alpha}} (0)| \leq C$)
for $\alpha < 2n$  as in
\cite{GL10}; we shall only give an outline of the proof here to correct
some minor errors in \cite{GL10}.

For fixed $\alpha < 2n$, we write $\eta = \sigma_{t_{\alpha}}/\sigma_{x_n}$
and define
\[ \mathcal{T} = \nabla_{\frac{\partial}{\partial t_{\alpha}}}
     - \eta \nabla_{\frac{\partial}{\partial x_n}}. \]
We have
\[ | \mathcal{T} (u - \varphi)| + (u_{y_n} - \varphi_{y_n})^2 \leq C
    \;\; \mbox{in} \; \Omega_{\delta}. \]
On $\partial M$ since $u - \varphi = 0$ and
$\cT$ is a tangential differential operator, we have
\[  \mathcal{T} (u - \varphi) = 0 \;\; \mbox{on}\;\partial M \cap B_{\delta} \]
and, similarly,
\begin{equation}
\label{cma-175}
(u_{y_n} - \varphi_{y_n})^2 \leq C \rho^2
  \;\; \mbox{on} \; \partial M \cap B_{\delta}.
\end{equation}

In \cite{GL10} we used the formula
\begin{equation}
\fg^{i\bj}  (\mathcal{T} u)_{i\bj}
   = \fg^{i\bj}  (u_{t_\alpha i\bj} + \eta u_{x_n i\bj})
 + \fg^{i\bj} \eta_{i\bj} u_{x_n} + 2 \fg^{i\bj}  \fRe \{\eta_i u_{x_n \bj}\}.
\end{equation}
The correct one should be
\begin{equation}
\label{gblq-B160}
\fg^{i\bj}  (\mathcal{T} u)_{i\bj}
   = \fg^{i\bj}  ((u_{t_\alpha})_{i\bj} - \eta (u_{x_n})_{i\bj})
 - \fg^{i\bj} \eta_{i\bj} u_{x_n} - 2 \fg^{i\bj}  \fRe \{\eta_i (u_{x_n})_{\bj}\}.
\end{equation}
We have $(u_{t_{\alpha}})_{\bj} = u_{t_{\alpha} \bj} + u_{\nabla_j t_{\alpha}}$ and
\begin{equation}
\label{gblq-B161}
(u_{t_\alpha})_{i\bj} = u_{t_\alpha i\bj} + u_{i (\nabla_{\bj} t_{\alpha})}
   + u_{(\nabla_i t_{\alpha}) \bj} - u_{\nabla_i \nabla_j t_{\alpha}}.
 \end{equation}
Note that $u_{i (\nabla_{\bj} t_{\alpha})} + u_{(\nabla_i t_{\alpha}) \bj}$
is equal to a linear combination of terms in the form $u_{i \bl}$ and $u_{k \bj}$
since $\nabla_i \frac{\partial}{\partial \bz_k} = 0$, and that
$|F^{i\bj} \fg_{i\bk}| \leq C$.

Differentiating equation~\eqref{cma2-M10}
with respect to $t_{\alpha}$, $\alpha \leq 2n$, by \eqref{gblq-B120} we obtain
\begin{equation}
\label{gblq-B50}
\begin{aligned}
|F^{i\bj}  u_{t_{\alpha} i\bj}|
  \leq \, & |(\log \psi)_{t_{\alpha}}| + |F^{i\bj} \chi_{i\bj t_{\alpha}}|
   + |F^{i\bj} (u_{t_{\alpha} i\bj} - u_{i\bj t_{\alpha}})|
 \leq  C (1 + F^{i\bj} g_{i\bj}).
\end{aligned}
\end{equation}
Here we also used the identity
 \begin{equation}
\label{gblq-B170'}
 F^{i\bj} T^l_{ki} u_{l\bj}
 = T^i_{ki} -\frac 1 {W} T_{ki}^l \fg_{l \bj} - F^{i\bj} T^l_{ki} \chi_{l\bj}.
 \end{equation}
 Next,
\begin{equation}
\label{gblq-B190}
 \begin{aligned}
  F^{i\bj} \eta_i (u - \varphi)_{x_n \bj}
   = &\, F^{i\bj} \eta_i (2 (u - \varphi)_{n \bj}
              + \sqrt{-1} (u - \varphi)_{y_n \bj}) \\
   = &\, 2 \eta_n -2\frac{\eta_i \fg_{n \bi}}{W}
         - 2 F^{i\bj} \eta_i (\chi_{n \bj} +\varphi_{n \bj})
         + \sqrt{-1} F^{i\bj} \eta_i (u - \varphi)_{y_n \bj}.
 \end{aligned}
 \end{equation}

By \eqref{gblq-B50} and \eqref{gblq-B190} we obtain
\begin{equation}
\label{gblq-B180}
 |\fg^{i\bj}  ((u_{t_\alpha})_{i\bj} - \eta (u_{x_n})_{i\bj})|
   \leq |\mathcal{T} (f)| + C_1 (1 + \fg^{i\bj}  g_{i\bj}).
\end{equation}
Now the rest of the proof is similar to \cite{GL10}. In summary we have
\begin{equation}
\label{cma-180}
|u_{t_{\alpha} x_n} (0)| \leq C \;
 \mbox{and} \; |u_{x_n t_{\alpha}} (0)| \leq C, \;\; \alpha < 2n.
\end{equation}

To finish the proof of \eqref{cma-37}
it suffices to prove
\begin{equation}
\label{cma-200}
 \fg_{n \bn} (0) \leq C.
\end{equation}

Expanding $\det \fg_{i\bj}$, equation~\eqref{cma2-M10} takes the form
\begin{equation}
\label{gblq-B310}
 \det \fg_{i\bj}(0) = a \fg_{n\bn}(0) - b = \psi \sum  \fg_{i\bi}(0)
\end{equation}
where $a = \det (\fg_{\alpha \bar{\beta}}(0)|_{\{1 \leq \alpha, \beta \leq n-1\}})$
and $b \geq 0$ is bounded. 
To show $\fg_{n\bn}(0) \leq C$ we only have to derive a uniform
positive lower bound for $a - \psi (0)$. For this we shall use an idea of
Trudinger~\cite{Trudinger95} combined with Lemma~\ref{gblq-lemma-C20}.

Let $T_C \partial M$ 
be the complex tangent bundle of $\partial M$ and
\[ T^{1,0}_C \partial M =  T^{1,0} M \cap T_C \partial M
   = \Big\{\xi \in T^{1,0} M: d \sigma (\xi) = 0\Big\}. \]
Let $\hat{\chi_u}$ and $\hat{\omega}$ denote the restrictions to
 $T_C \partial M$ of $\chi_u$ and $\omega$ respectively. We wish to show
\[ m_0 = 
       \min_{\partial M}
     \frac{\hat{\chi_u}^{n-1}}{\psi \hat{\omega}^{n-1}} > 1. \]

Suppose $m_0$ is attained at a point $0 \in \partial M$.
Choose local coordinates around $0$ as before
such that $e_n (0)$ is normal to $\partial M$ and
$g_{i\bj} (0)= \delta_{ij}$.

Let $\zeta_1, \ldots, \zeta_{n-1}$ be a local frame of
vector fields in $T_C^{(1,0)} \partial M$ with
$g (\zeta_{\alpha}, \bar{\zeta_{\beta}}) = \delta_{\alpha \beta}$ and
$\zeta_{\alpha} (0) = \tau_{\alpha} (0)$.
Let $\fa_{\alpha \beta} = \chi_u (\zeta_{\alpha}, \bar{\zeta_{\beta}})$.
Then
\[ \hat{\chi_u}^{n-1}
   =  \det (\fa_{\alpha \bar{\beta}}) \hat{\omega}^{n-1}. \]
We extend  $\zeta_1, \ldots, \zeta_{n-1}$
 by their parallel transports
along geodesics normal to $\partial M$ so that they are smoothly defined in
a neighborhood of $0$. Similarly we assume $\varphi$ is extended smoothly
to $\bM$. We denote $\tilde{u}_{\alpha \beta} = u_{\zeta_{\alpha} \bar{\zeta_{\beta}}}$,
etc.

Define, for a positive definite $(n-1) \times (n-1)$
Hermitian matrix $\{r_{\alpha  \bar{\beta}}\}$,
\[ G[r_{\alpha \bar{\beta}}] \equiv \det (r_{\alpha \bar{\beta}})^{\frac {1}{n-1}} \]
and let
\[ G^{\alpha \bar{\beta}}_0 = \frac{\partial G}{\partial r_{\alpha \bar{\beta}}}
[\fa_{\alpha \bar{\beta}} (0)]. \]
Note that $G$ is concave and homogeneous of degree one.
Therefore, for any $\{r_{\alpha  \bar{\beta}}\} > 0$,
\[ G^{\alpha \bar{\beta}}_0 r_{\alpha  \bar{\beta}} \geq G [r_{\alpha  \bar{\beta}}]. \]

Since
$\tilde{u}_{\alpha \bar{\beta}} (0) = {u}_{\alpha \bar{\beta}} (0) =
\ul{u}_{\alpha \bar{\beta}} (0) - (u - \ul{u})_{x_n} (0) \sigma_{\alpha \bar{\beta}}$,
\[ G [\fa_{\alpha \bar{\beta}} (0)]
   = G^{\alpha \bar{\beta}}_0 \fa_{\alpha \bar{\beta}} (0)
   = G^{\alpha \bar{\beta}}_0 (\ul{u}_{\alpha \bar{\beta}} (0)
     + \chi_{\alpha \bar{\beta}} (0))
    -  (u - \ul{u})_{x_n} (0) G^{\alpha \bar{\beta}}_0 \sigma_{\alpha \bar{\beta}} (0). \]
Suppose that for some small $\theta_0 > 0$ to be determined,
\[ (u - \ul{u})_{x_n} (0) G^{\alpha \bar{\beta}}_0 \sigma_{\alpha \bar{\beta}} (0)
   \leq \theta_0  G^{\alpha \bar{\beta}}_0 (\ul{u}_{\alpha \bar{\beta}} (0)
     + \chi_{\alpha \bar{\beta}} (0)). \]
Then
\[ \begin{aligned}
  G [\fa_{\alpha \bar{\beta}} (0)]
  \geq \,& (1 - \theta_0) G^{\alpha \bar{\beta}}_0
           (\ul{u}_{\alpha \bar{\beta}} (0) + \chi_{\alpha \bar{\beta}} (0)) \\
  \geq \,& (1 - \theta_0)
           G[\ul{u}_{\alpha \bar{\beta}} (0) + \chi_{\alpha \bar{\beta}} (0)] \\
  \geq \,& (1 - \theta_0) \Big(\frac{\det (\ul{u}_{i\bj} + \chi _{i\bj})}
            {\ul{u}_{n\bn} + \chi_{n\bn}}\Big)^{\frac{1}{n-1}} \\
     = \,& (1 - \theta_0) \Big(\frac{(\Delta \ul{u} + \tr \chi) \psi}
        {\ul{u}_{n\bn} + \chi_{n\bn}}\Big)^{\frac{1}{n-1}} \\
  \geq \,& (1 - \theta_0)
           (1 + (n-1) \epsilon^2)^{\frac{1}{n-1}} \psi^{\frac{1}{n-1}} (0).
\end{aligned} \]
Choosing $\theta_0$ small enough we obtain
\[ m_0 = \frac{\det \fa_{\alpha \bar{\beta}} (0)}{\psi (0)}
         \geq 1 + \frac{\theta_0}{2}. \]

Suppose now that
\[ (u - \ul{u})_{x_n} (0) G^{\alpha \bar{\beta}}_0 \sigma_{\alpha \bar{\beta}} (0)
    > \theta_0  G^{\alpha \bar{\beta}}_0 (\ul{u}_{\alpha \bar{\beta}} (0)
      + \chi_{\alpha \bar{\beta}} (0)). \]
On $\partial M$ near $0$,
\[ \tilde{u}_{\alpha \bar{\beta}} = \tilde{\varphi}_{\alpha \bar{\beta}}
  - (u - \varphi)_{\nu}  \tilde{\sigma}_{\alpha \beta} \]
  where $\nu$  is the interior unit normal to $\partial M$.
  Write
  \[ \nu = \sum_{\alpha \leq 2n} \nu_{\alpha} \frac{\partial}{\partial t_{\alpha}}. \]
We have $|\nu_{\alpha}| \leq C \rho$ for $\alpha < 2n$ and
$|(u - \varphi)_{t_{\alpha}}| \leq C \rho$
since $\nu_{\alpha} (0) = 0$ for $\alpha < 2n$ and $u = \varphi$ on $\partial M$.
Define
\[ \begin{aligned}
  \varPhi = \,& G^{\alpha \bar{\beta}}_0 (\tilde{\varphi}_{\alpha \bar{\beta}}
                 + \tilde{\chi}_{\alpha \bar{\beta}})
                 - (u - \varphi)_{x_n} \nu_{2n} G^{\alpha \bar{\beta}}_0
                   \tilde{\sigma}_{\alpha \bar{\beta}}
                 - (m_0 \psi)^{\frac{1}{n-1}} \\
     \equiv \,& - (u - \varphi)_{x_n} \eta + Q
         \end{aligned} \]
where $\eta \equiv \nu_{2n} G^{\alpha \bar{\beta}}_0 \tilde{\sigma}_{\alpha \bar{\beta}}$
and $Q$ are smooth. On $\partial M$,
\[ \begin{aligned}
 \varPhi = \,& G^{\alpha \bar{\beta}}_0 \fa_{\alpha \bar{\beta}}
               + [(u - \varphi)_{\nu} - \nu_{2n} (u - \varphi)_{x_n}]
                  G^{\alpha \bar{\beta}}_0 \tilde{\sigma}_{\alpha \bar{\beta}}
                   - (m_0 \psi)^{\frac{1}{n-1}} \\
      \geq \,& [(u - \varphi)_{\nu} - \nu_{2n} (u - \varphi)_{x_n}] \geq - C \rho^2
     \end{aligned} \]
Note that $\varPhi (0) = 0$ and
\[ \eta (0) \geq
   \theta_0 (1 + (n-1) \epsilon^2)^{\frac{1}{n-1}}
    \psi^{\frac{1}{n-1}} (0)/(u - \ul{u})_{x_n} (0) \geq c_2 > 0. \]

We calculate as before using \eqref{gblq-B161}
\begin{equation}
\label{gblq-B360}
 \begin{aligned}
F^{i\bj} \varPhi_{i\bj}
  \leq \, & - \eta F^{i\bj} u_{x_n i\bj}
   - 2 F^{i\bj} \fRe\{\eta_i (u - \varphi)_{x_n \bj}\}
           + C F^{i\bj}  g_{i\bj}.
\end{aligned}
\end{equation}
Therefore, by \eqref{gblq-B50} and \eqref{gblq-B190}
\begin{equation}
\label{cma-105}
F^{i\bj} (\varPhi - |(u - \varphi)_{y_n}|^2)_{i\bj}
      \leq C (1 + F^{i\bj}  g_{i\bj}).
\end{equation}

Consequently, by Lemma~\ref{cma-lemma-20} we see that
$A v + B \rho^2 + \varPhi - |(u - \varphi)_{y_n}|^2 \geq 0$
on $\partial (M \cap B_{\delta} (0))$ and
\begin{equation}
\label{cma-106}
F^{i\bj} [A v + B \rho^2 + \varPhi - |(u - \varphi)_{y_n}|^2]_{i\bj}
      \leq 0
        \;\; \mbox{in $M \cap B_{\delta} (0)$}
\end{equation}
when $A \gg B \gg 1$. By the maximum principle,
$A v + B \rho + \varPhi - |(u - \varphi)_{y_n}|^2 \geq 0$
in $M \cap B_{\delta} (0)$.
Note that
$\varPhi (0) = G [\fg_{\alpha \bar{\beta}} (0)] - (m_0 \psi (0))^{\frac{1}{n-1}} = 0$.
We see that $\varPhi_{e_n} (0) \geq - C$.
This proves
\begin{equation}
\label{cma-310}
 u_{x_n x_n} (0) \leq \frac{C}{\eta (0)} \leq \frac{C}{c_2}.
\end{equation}

So we have an {\em a priori}
upper bound for all eigenvalues of $\{\fg_{i\bj} (0)\}$.
Therefore they must admit a positive lower bound. Consequently,
by equation~ \eqref{cma2-M10},
\[ m_0 \geq \frac{\sum\fg_{i\bi} (0)}{\fg_{n\bn} (0)} \geq 1 + c_0. \]
This completes the proof of \eqref{cma-37}.

\bigskip

\small

\end{document}